\newcommand{\R}{{\mathbb R}}
\newcommand{\teta }{\theta }
\numberwithin{equation}{section}
\newtheorem{theorem}{Theorem}[section]
\newtheorem{proposition}[theorem]{Proposition}
\newtheorem{lemma}[theorem]{Lemma}
\newtheorem{corollary}[theorem]{Corollary}
\newtheorem{remark}[theorem]{Remark}
\theoremstyle{definition}
\newcommand{\brm}{\begin{remark}\rm}
\newcommand{\erm}{\end{remark}}
\newcommand{\brms}{\begin{remark}\rm}
\newcommand{\erms}{\end{remark}}
\newcommand{\bte}{\begin{theorem}}
\newcommand{\ete}{\end{theorem}}
\newcommand{\bpr}{\begin{proposition}}
\newcommand{\epr}{\end{proposition}}
\newcommand{\ble}{\begin{lemma}}
\newcommand{\ele}{\end{lemma}}
\newcommand{\beq}{\begin{equation}}
\newcommand{\eeq}{\end{equation}}
\newcommand{\bdm}{\begin{displaymath}}
\newcommand{\edm}{\end{displaymath}}
\numberwithin{equation}{section}
\newcommand{\bos}{\begin{remark}\rm}
\newcommand{\eos}{\end{remark}}
\newcommand{\ben}{\begin{enumerate}}
\newcommand{\een}{\end{enumerate}}
\newcommand{\be}{\begin{equation}}
\newcommand{\ee}{\end{equation}}
\title[Monotonicity in half-spaces]{Monotonicity in half-spaces of positive solutions to $-\Delta_p u=f(u)$ in the case $p>2$}
\author[A.\ Farina]{Alberto Farina$^+$}
\address{Universit\'e de Picardie Jules Verne
\newline\indent
LAMFA, CNRS UMR 6140\newline\indent
Amiens, France}
\email{alberto.farina@u-picardie.fr}
\author[L.\ Montoro]{Luigi Montoro$^*$}
\address{Dipartimento di Matematica
\newline\indent
Universit\`a della Calabria
\newline\indent
Ponte Pietro Bucci 31B, I-87036 Arcavacata di Rende, Cosenza, Italy}
\email{montoro@mat.unical.it}
\author[B.\ Sciunzi]{Berardino Sciunzi$^*$}
\address{Dipartimento di Matematica
\newline\indent
Universit\`a della Calabria
\newline\indent
Ponte Pietro Bucci 31B, I-87036 Arcavacata di Rende, Cosenza, Italy}
\email{sciunzi@mat.unical.it}
\thanks{\it 2000 Mathematics Subject
 Classification: 35B05,35B65,35J70}
\begin{document}

\begin{abstract}
We consider  weak distributional solutions to the equation
$-\Delta_pu=f(u)$  in half-spaces under  zero Dirichlet boundary condition.  We assume that the nonlinearity is positive and superlinear at zero.
For $p>2$ (the case $1<p\leq2$ is already known) we prove that any positive solution is strictly monotone increasing in the direction orthogonal to the boundary of the half-space.
As a consequence we deduce some Liouville type theorems for the Lane-Emden type equation. Furthermore any nonnegative solution turns out to be $C^{2,\alpha}$ smooth.
\end{abstract}

\maketitle

%%%%%%%%%%%%%%%%%%%%%%
\section{Introduction}\label{introdue}
We consider the problem
\begin{equation}\label{E:P}
\begin{cases}
-\Delta_p u=f(u), & \text{ in }\mathbb{R}^N_+\\
u(x',y) \geqslant 0, & \text{ in } \mathbb{R}^N_+\\
u(x',0)=0,&  \text{ on }\partial\mathbb{R}^N_+
\end{cases}
\end{equation}
where $N \geq 2$ and  $f(\cdot)$ satisfies:

\

\begin{itemize}
\item[($h_f$)] the nonlinearity $f$ is \emph{positive} i.e.  $f(t)>0$ for $t>0$, locally Lipschitz continuous in $\mathbb{R}^+\cup \{0\}$ and
\[
 \lim_{t\rightarrow 0^+}\frac{f(t)}{t^{p-1}}=f_0\in\mathbb{R}^+\cup\{0\}.
 \]
\end{itemize}

\

\noindent In the following we denote a generic point in
$\mathbb{R}^N$ by $(x',y)$ with $x'=(x_1,x_2, \ldots, x_{N-1})$ and $y=x_N$, we assume with no loss of generality that
$\mathbb{R}^N_+=\{y>0\}$.
Furthermore, according to the regularity results in \cite{Di,Li,T} (see also the recent developments in \cite{kus,tex}), we assume that $u \in C^{1,\alpha}_{loc}(\overline{\mathbb{R}^N_+})$ and fulfills the equation in the weak distributional meaning. Actually in our case the regularity up to the boundary does not follow directly by \cite{Li} and an argument by reflection is needed. This is quite standard and will be described also later on in this paper. \\
\noindent By the strong maximum principle \cite{V}, it follows that any nonnegative nontrivial solution is actually (strictly) positive. In this case: \emph{we study the monotonicity of the solution in the direction orthogonal to the boundary of the half-space}.\\

\noindent  The main tool is
  the Alexandrov-Serrin moving plane method that goes back to \cite{A,S}.
  It is well known that the moving plane procedure allows to prove monotonicity and symmetry properties of the solutions to general PDE. In the case of  bounded domains and in the semilinear case $p=2$, this study was started in the celebrated papers  \cite{BN,GNN}.
  In the case of unbounded domains
  the main examples, arising from many applications,  are provided by the whole space $\mathbb{R}^N$ and by the half-space $\mathbb{R}^N_+$. For the case of the whole space, where radial symmetry of the solutions is expected, we refer to \cite{CGS,GNN,GNN2}.
In this paper we will  address the case when the domain is an half-space. We refer the readers
 to \cite{BCN1,BCN2,BCN3,DaGl,Dan1,Dan2,Fa,FV2,QS}  for previous results concerning  monotonicity of the solutions
in half-spaces, in the non-degenerate case.\\

\noindent The case of $p$-Laplace equations is really  harder to study.  In fact the $p$-laplacian is a nonlinear operator and, as a consequence, comparison principles are not equivalent to maximum principles. The degenerate nature of the operator also causes the lack of regularity of the solutions.  Furthermore, in the case $p>2$ that we are considering, the use of weighted Sobolev spaces is naturally associated to the study of qualitative properties of the solutions.
This issue is more delicate in unbounded domains. We cannot describe with more details this fact that will be clarified  while reading the paper. Let us only say that, the use of weighted Sobolev spaces is necessary in the case $p>2$ and it requires the use of a weighted Poincar\'{e} type inequality with weight $\rho = |\nabla u|^{p-2}$ (see \cite{DS1}). The latter involves constants that may blow up when the solution approaches zero that may happen also for positive solutions in unbounded domains. Namely once again the lack of compactness plays an important role.\\

\noindent First results in bounded domains and in the case $1<p<2$ were obtained in \cite{DP}. The case $p>2$ requires the above mentioned use of weighted Sobolev spaces and was solved in \cite{DS1}, for positive nonlinearities ($f(t)>0$ for $t>0$).
In the case of the whole space, we refer the readers to the recent results in \cite{AdvS,Sci3,VETOIS}.\\

\noindent Considering the $p$-Laplace operator and problems in half-spaces, first results have been obtained in \cite{DS3} in dimension two. The same technique has been also exploited in the fully nonlinear case in \cite{charro}. In higher dimensions, first results have been obtained in the singular case $1<p<2$ in \cite{FMS,FMRS} (see also \cite{galakhov}) where
positive locally Lipschitz continuous nonlinearities are considered. A partial answer in the more difficult degenerate case $p>2$ was obtained in \cite{FMS3}, where \emph{power-like nonlinearities} are considered  under the restriction $2<p<3$. Here, considering a larger class of nonlinearities,
namely considering positive nonlinearities that are superlinear at zero, we remove the condition $2<p<3$ and prove the following:
\begin{theorem}\label{thm:monotoniadellau}
Let $p>2$ and let $u\in C^{1,\alpha}_{loc}({\overline {{\mathbb{R}^N_+}}})$ be
a positive solution to \eqref{E:P} with $|\nabla u|\in L^{\infty}(\mathbb{R}^N_+)$.
 Then, under the assumption $(h_f)$, it follows that
 \[
\frac{\partial u}{\partial y}\,>\,0 \quad \text{in}\quad \mathbb{R}^N_+.
 \]
 As a consequence $u \in C^{2,\alpha'}_{loc}(\overline{\mathbb{R}^N_+})$ for some $0<\alpha '<1$.
\end{theorem}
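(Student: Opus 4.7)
The proof rests on the Alexandrov--Serrin moving plane method in the $y$-direction. For $\lambda>0$ set $\Sigma_\lambda:=\{(x',y)\in \mathbb{R}^N_+:0<y<\lambda\}$, denote the reflection $(x',y)_\lambda:=(x',2\lambda-y)$, and write $u_\lambda(x',y):=u((x',y)_\lambda)$. Both $u$ and $u_\lambda$ solve $-\Delta_p v=f(v)$ weakly in $\Sigma_\lambda$, and on $\partial\Sigma_\lambda$ one has $u\leq u_\lambda$ (indeed $u(x',0)=0\leq u(x',2\lambda)=u_\lambda(x',0)$ while $u=u_\lambda$ on $\{y=\lambda\}$). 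The goal is $u\leq u_\lambda$ in $\Sigma_\lambda$ for every $\lambda>0$; letting $\lambda\downarrow y$ gives $\partial_y u\geq 0$, and the strong comparison principle then upgrades this to the strict inequality. Define
\[
\Lambda:=\sup\{\lambda>0:u\leq u_{\lambda'}\ \text{in}\ \Sigma_{\lambda'}\ \text{for every}\ 0<\lambda'\leq\lambda\},
\]
so everything reduces to proving $\Lambda=+\infty$.

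\textbf{Starting step.} I would first show $\Lambda>0$. Taking $(u-u_\lambda)^+$, extended by zero outside $\Sigma_\lambda$, as a test function in the difference of the weak formulations for $u$ and $u_\lambda$, and using the standard algebraic inequalities for the $p$-Laplacian (with weight $\rho_\lambda:=(|\nabla u|+|\nabla u_\lambda|)^{p-2}$ as in \cite{DS1}) together with the local Lipschitz continuity of $f$, yields an inequality of the form
\[
\int_{\Sigma_\lambda}\rho_\lambda\,\bigl|\nabla(u-u_\lambda)^+\bigr|^2\,dx\ \leq\ L\int_{\Sigma_\lambda}\bigl((u-u_\lambda)^+\bigr)^2\,dx.
\]
The weighted Poincar\'e inequality of \cite{DS1} on narrow strips, whose constant vanishes as $\lambda\to 0^+$, then forces $(u-u_\lambda)^+\equiv 0$ for all sufficiently small $\lambda$.

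\textbf{Moving the plane up.} Suppose by contradiction $\Lambda<+\infty$. By continuity $u\leq u_\Lambda$ in $\Sigma_\Lambda$, and since $u(x',0)=0<u_\Lambda(x',0)$ the strong comparison principle, applied in connected components of $\{|\nabla u|+|\nabla u_\Lambda|>0\}$, yields $u<u_\Lambda$ in $\Sigma_\Lambda$. The task is to find $\bar\varepsilon>0$ with $u\leq u_{\Lambda+\varepsilon}$ in $\Sigma_{\Lambda+\varepsilon}$ for every $\varepsilon\in(0,\bar\varepsilon)$, contradicting the definition of $\Lambda$. The standard device is: fix a large compact set $K\subset\Sigma_\Lambda$; strict comparison and continuity give $u<u_{\Lambda+\varepsilon}$ on $K$ for $\varepsilon$ small, so one may repeat the estimate above with the integrals restricted to $\Sigma_{\Lambda+\varepsilon}\setminus K$. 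If this complementary region is "narrow" in the appropriate weighted sense, its Poincar\'e constant is small enough to absorb $L$.

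\textbf{Main obstacle and conclusion.} The delicate step, and the source of the restriction $2<p<3$ in \cite{FMS3}, is exactly this last one on the unbounded domain $\Sigma_{\Lambda+\varepsilon}\setminus K$. The weighted Poincar\'e constant of \cite{DS1} depends on the weight $\rho_\lambda$, and since $|\nabla u|$ may collapse at infinity this constant can blow up. I would overcome this by combining: (i) the a priori bound $|\nabla u|\in L^\infty(\mathbb{R}^N_+)$, yielding an upper control on $\rho_\lambda$; (ii) the assumption $(h_f)$ at zero, which, via comparison with a suitable one-dimensional problem in the slab, prevents both $u$ and $|\nabla u|$ from collapsing too fast in $\Sigma_{\Lambda+\varepsilon}$; (iii) the choice of $K$ as a large cylinder $\{|x'|<R\}\cap\Sigma_{\Lambda+\varepsilon}$, together with cut-off test functions, so that the $x'$-unbounded tail contributes negligibly. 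A monotone/dominated convergence argument as $R\to+\infty$ and $\varepsilon\to 0^+$ then yields the contradiction, hence $\Lambda=+\infty$. The strict monotonicity $\partial_y u>0$ follows from a further application of the strong comparison principle (with a Hopf-type lemma at $\{y=0\}$, where $f(u)>0$ once $u>0$). Finally, since $|\nabla u|>0$ everywhere in $\mathbb{R}^N_+$, the $p$-Laplacian is locally uniformly elliptic along $u$ and standard Schauder theory upgrades the regularity to $u\in C^{2,\alpha'}_{loc}(\overline{\mathbb{R}^N_+})$.
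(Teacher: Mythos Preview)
Your overall moving-plane outline is the right skeleton, but two of the three steps, as written, have genuine gaps that the paper resolves with different mechanisms.

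\textbf{Starting step.} The weighted Poincar\'e inequality of \cite{DS1} does have a constant tending to zero as the strip width shrinks, but only for \emph{fixed} weight-integrability bound $C^*$ (see Theorem~\ref{thm:Poincar} and Proposition~\ref{pro:SobConstant}). Controlling $C^*$ requires $f(u)\geq \beta_1>0$ on a neighbourhood of the reflected strip, hence a lower bound on $u$ there. For small $\lambda$ the reflected region $\{\lambda\leq y\leq 2\lambda\}$ sits near the boundary, where no such lower bound is available a priori in an unbounded strip; your inequality $\int\rho_\lambda|\nabla(u-u_\lambda)^+|^2\leq L\int((u-u_\lambda)^+)^2$ cannot be closed. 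The paper avoids this entirely: Theorem~\ref{lem:saliacavallo} proves $\partial_y u>0$ in a neighbourhood of $\{y=0\}$ by a rescaling/compactness argument (normalize $u$ along a contradicting sequence, use the interior and \emph{boundary} Harnack inequalities to get uniform $C^{1,\alpha}$ bounds, pass to a limit, and apply Hopf's lemma). No Poincar\'e inequality enters here.

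\textbf{Moving step.} The ``fix a compact $K$ and work on $\Sigma_{\Lambda+\varepsilon}\setminus K$'' paradigm is the bounded-domain device and does not transplant: the complement of any compact set still contains full-width unbounded strips, which are not narrow in any sense. The paper's route is different in three essential respects. First, it shows (via an analysis of limiting profiles at infinity, Proposition~4.1 of \cite{FMS}) that $\mathrm{supp}\,(u-u_{\bar\lambda+\varepsilon})^+\subset \Sigma_\delta\cup\Sigma_{(\bar\lambda-\delta,\bar\lambda+\varepsilon)}$, i.e.\ two \emph{narrow} but unbounded strips. Second, to make the weighted Poincar\'e constant uniform in $\varepsilon$ on those strips, one needs a uniform lower bound $u\geq\gamma>0$ on $\mathcal I^+_{(\lambda,\varepsilon)}$ (the projection onto $\{y=\lambda\}$ of the support). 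This is Lemma~\ref{cacsasddcarlll}, proved by a blow-up argument: if it fails, the rescaled functions converge to a nonnegative $p$-harmonic function in $\mathbb{R}^N_+$ vanishing on the boundary, which by \cite{KSZ} must be linear in $y$, contradicting $u\geq u_{\lambda+\varepsilon_n}$ at the chosen points. Your item (ii) (``comparison with a one-dimensional problem'') is not this mechanism and it is unclear how it would yield the required uniform bound. Third, the unbounded $x'$-direction is handled not by dominated convergence but by the iteration $\mathcal L(R)\leq\theta\,\mathcal L(2R)$ with $\theta<2^{-N}$ and the trivial bound $\mathcal L(R)\leq CR^N$ (Lemma~\ref{Le:L(R)}); only after these three ingredients does Proposition~\ref{lem:khjadsjhdfhjdf} close the argument.

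The final paragraph (strict positivity of $\partial_y u$ via Theorem~\ref{hthPMFderrr}, and $C^{2,\alpha'}$ regularity from non-degeneracy) is correct and matches the paper.
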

Our monotonicity result holds in particular for Lane-Emden type equations, namely in the case $f(u)=u^q$ with $q\geq p-1$. Note that, the case
$q\leq p-1$, or more generally the case when, for some $t_0>0$, it holds
\begin{equation}\nonumber
f(t)\geq c\,t^{p-1}\qquad\text{for}\,\,\,t\in[0,t_0]\,,
\end{equation}
is already contained  in \cite[Theorem 3]{FMS3}. Furthermore Theorem  \ref{thm:monotoniadellau} is proved without a-priori assumptions on the behavior of the solution, that is,   at infinity the solution may decay at zero in some regions, while it can be far from zero in some other regions. Furthermore it is crucial the fact that only local regularity on the solution is required in our result.
 Note in fact that,  assuming that the solution has summability properties at infinity, namely  assuming that the solution belongs to some Sobolev space, then the monotonicity result is somehow more easy to deduce and it
 generally leads to the nonexistence of such solutions, we refer to \cite{mercuri} (see also \cite{Zou}). Finally it is worth emphasizing that we prove the first step of the moving plane procedure in a very general setting.
 In fact, in Theorem \ref{lem:saliacavallo}, we prove that any positive solution is monotone increasing near the boundary for any $1<p<\infty$ and assuming only that
the nonlinearity $f$ is merely  continuous in $\mathbb{R}^+\cup \{0\}$ such that, for some $T>0$, it holds that
$|f(t)|\leq \bar k\, t^{p-1}$ for $t\in[0,T]$
 and for some $\bar k=\bar k(T)>0$.\\

\noindent The technique developed to prove Theorem \ref{thm:monotoniadellau} also allows to deduce a monotonicity result for solutions to equations involving a different class of nonlinearities. We have the following
\begin{theorem}\label{mainthmnonpositive}
Let  $p>2$ and let $u \in C^{1,\alpha}_{loc}(\overline{\mathbb{R}^N_+})\cap W^{1,\infty}(\mathbb{R}^N_+)$ be a positive solution to \eqref{E:P}.
Suppose that $f(\cdot)$ is  locally Lipschitz continuous in $\mathbb{R}^+\cup \{0\}$ and there exists $  t_0 > 0 $ such that
\[
 f(s)> 0 \qquad \text{for }\quad  0<t<t_0\qquad\text{and}\qquad f(s)< 0 \qquad \text{for }\quad  t>t_0\,.
\]
Assume furthermore that
\begin{equation}\label{dnvkjdvkvkvks}
 \lim_{t\rightarrow 0^+}\frac{f(t)}{t^{p-1}}=f_0\in\mathbb{R}^+\cup\{0\} \qquad \text{and}\qquad\lim_{t\rightarrow t_0}\frac{f(t)}{(t_0-t)|t_0-t|^{p-2}}=f^0\in\mathbb{R}^+\cup\{0\}\,.
 \end{equation}
 Then
 \[
\frac{\partial u}{\partial y}\,>\,0 \quad \text{in}\quad \mathbb{R}^N_+.
 \]
  As a consequence $u \in C^{2,\alpha'}_{loc}(\overline{\mathbb{R}^N_+})$ for some $0<\alpha '<1$.
\end{theorem}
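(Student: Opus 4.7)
The overall architecture mirrors that of Theorem \ref{thm:monotoniadellau}: the Alexandrov--Serrin moving plane procedure applied in the $y$-direction, combined with the weighted Sobolev framework (weight $|\nabla u|^{p-2}$) from \cite{DS1}. The two new features to handle are that $f$ changes sign and that $u$ may approach the upper zero $t_0$ of $f$ in the interior of the half-space.

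A preliminary step is the a priori $L^\infty$-bound $\sup_{\mathbb{R}^N_+} u \le t_0$. I would argue by blow-up: if $M:=\sup u > t_0$, pick $x_n\in\mathbb{R}^N_+$ with $u(x_n)\to M$ and set $u_n(\cdot):= u(\cdot + x_n)$. By $W^{1,\infty}$ and the $C^{1,\alpha}_{\mathrm{loc}}$ estimates of \cite{Di,Li,T}, a subsequence converges in $C^1_{\mathrm{loc}}$ to a solution $v$ of $-\Delta_p v = f(v)$, either on a half-space with zero boundary datum (if $(x_n)_N$ stays bounded) or on the whole of $\mathbb{R}^N$ (otherwise). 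In either case $v$ attains its maximum $M>t_0$ at an interior point $x_0$; by continuity $f(v)<0$ in a neighborhood of $x_0$, so $v$ is $p$-subharmonic there, hence locally constant, hence $\Delta_p v\equiv 0$ locally, in contradiction with $-\Delta_p v=f(M)<0$. Therefore $u\le t_0$ on $\mathbb{R}^N_+$ and $f(u)\ge 0$, so the equation is formally of the same type as the one in Theorem \ref{thm:monotoniadellau}, with an additional potential degeneracy only on the level set $\{u=t_0\}$.

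The moving plane then proceeds as in the proof of Theorem \ref{thm:monotoniadellau}. For $\lambda>0$ set $\Sigma_\lambda=\{0<y<\lambda\}$, $x_\lambda=(x',2\lambda-y)$, $u_\lambda(x)=u(x_\lambda)$, and define
\[
\bar\lambda := \sup\bigl\{\lambda>0 \,:\, u\le u_\mu \text{ in }\Sigma_\mu\text{ for every }0<\mu\le\lambda\bigr\}.
\]
The fact that $\bar\lambda>0$ is granted by Theorem \ref{lem:saliacavallo}, whose hypotheses are satisfied thanks to the first limit in \eqref{dnvkjdvkvkvks}, which yields $|f(t)|\le \bar k\, t^{p-1}$ for $t$ small. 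To upgrade $\bar\lambda$ to $+\infty$ I would test the difference of the weak equations for $u_\lambda$ and $u$ with $(u-u_\lambda)^+\psi^2$ (for a suitable cutoff $\psi$) and apply the weighted Poincar\'e inequality of \cite{DS1}, exactly as in Theorem \ref{thm:monotoniadellau}. The crucial new pointwise estimate is
\[
\bigl(f(u) - f(u_\lambda)\bigr)(u-u_\lambda)^+ \;\le\; L\,\bigl((u-u_\lambda)^+\bigr)^p \qquad \text{on }\Sigma_\lambda,
\]
which near $u\approx 0$ follows from the first limit in \eqref{dnvkjdvkvkvks} (as in Theorem \ref{thm:monotoniadellau}) and near $u\approx t_0$ follows from the second limit in \eqref{dnvkjdvkvkvks}, i.e., from the $(p-1)$-order vanishing of $f$ at $t_0$; in the intermediate range where both $u$ and $u_\lambda$ stay away from $\{0,t_0\}$, the local Lipschitz continuity of $f$ provides an even simpler bound.

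Strict monotonicity $\partial u/\partial y>0$ follows from the strong comparison principle together with $u\not\equiv u_\lambda$, which is forced by the boundary datum $u(x',0)=0<u(x',2\lambda)=u_\lambda(x',0)$. The $C^{2,\alpha'}$ regularity up to $\partial\mathbb{R}^N_+$ is obtained, as in Theorem \ref{thm:monotoniadellau}, by combining the strict monotonicity (which yields $|\nabla u|>0$ in a slab $\{0<y<\delta\}$ and hence uniform ellipticity there) with classical Schauder estimates and the standard odd reflection across $\{y=0\}$. The main obstacle I foresee is the uniform control of the weighted Poincar\'e constant as $\lambda\to+\infty$: the novelty with respect to Theorem \ref{thm:monotoniadellau} is that critical points of $u$ can now accumulate on the level set $\{u=t_0\}$ as well, and the second nondegeneracy hypothesis in \eqref{dnvkjdvkvkvks} is precisely what ensures that the argument continues to close there.
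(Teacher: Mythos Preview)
Your overall architecture is right, but the paper takes a shorter route that sidesteps the gap in your argument. Rather than re-running the moving plane with modified estimates near $t_0$, the paper \emph{reduces} Theorem~\ref{mainthmnonpositive} to Theorem~\ref{thm:monotoniadellau}: first $0<u\le t_0$ (by \cite[Theorem~1.7]{FMS}, essentially your blow-up step), then $u<t_0$ strictly (the second limit in~\eqref{dnvkjdvkvkvks} puts $t_0-u$ in the setting of Theorem~\ref{semihop}), and finally $\sup_{\Sigma_\lambda}u<t_0$ for every fixed $\lambda>0$ (via the argument of \cite[Theorem~1.3]{FMRS}). Once $u$ is uniformly bounded away from $t_0$ in each strip, $f$ is strictly positive on the range of $u$ there, so $(h_f)$ holds on that range and the proof of Theorem~\ref{thm:monotoniadellau} applies verbatim.

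The step you omit---bounding $u$ away from $t_0$ in each strip---is exactly what makes the weighted machinery close, and your substitute does not. In Proposition~\ref{lem:khjadsjhdfhjdf} the Poincar\'e weight $\rho=|\nabla u_\lambda|^{p-2}$ is controlled through Proposition~\ref{pro:SobConstant}, which requires $f(u)\ge\beta_1>0$ on the relevant cubes; if $u_\lambda$ can approach $t_0$ then $f(u_\lambda)\to 0$ and the constant $C^*$ (hence $C_P^*$) blows up. The second limit in~\eqref{dnvkjdvkvkvks} governs the size of the reaction term $I_2$, not the integrability of $|\nabla u_\lambda|^{-(p-2)}$, so it does not resolve the obstacle you correctly flag at the end. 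Separately, your displayed pointwise inequality $(f(u)-f(u_\lambda))(u-u_\lambda)^+\le L\bigl((u-u_\lambda)^+\bigr)^p$ is false in the intermediate range $u,u_\lambda\in[\varepsilon,t_0-\varepsilon]$: there $f$ is only Lipschitz, so the right-hand exponent is $2$, not $p$, and for $p>2$ no uniform constant $L$ can absorb the discrepancy as $(u-u_\lambda)^+\to 0$. You would need either to split the support of $(u-u_\lambda)^+$ into regimes and argue with different mechanisms in each, or---as the paper does---to first exclude the regime $u\approx t_0$ altogether.
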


Theorem \ref{mainthmnonpositive} is mainly a corollary of Theorem \ref{thm:monotoniadellau} and it extends to the degenerate case $p>2$ earlier results in  \cite{FMRS} (see Theorem 1.3 there and see also Theorem 1.8 in \cite{FMS}). It applies, for instance, to solutions of
\begin{equation}\nonumber
-\Delta_p u=  u(1-u^2)\vert 1 - u^2 \vert^q,
\end{equation}
where $ q \geq p-2$. When $p=2$ and $q = 0$, the above equation reduces to
$$-\Delta u=u(1-u^2)$$
which is the celebrated Allen-Cahn equation arising in a famous conjecture of De Giorgi.

\

%It is also worth mentioning that our proof could be repeated in order to consider a more general class nonlinearities, namely we could easy consider the case where $f(u)$ is
%such that $ c_1 u^r\leq f(u) \leq  c_2u^s$ (near $u=0$) for some positive constants $c_1,c_2$ and for $r,s>p-1
%$.

\noindent The monotonicity of the solution  implies in particular  the stability of the solution, see  \cite{DFSV,FSV}. This allows us to deduce some Liouville type theorems. Following \cite{DFSV,Fa2}, we  set
\begin{equation}\nonumber%\label{**B}
q_c(N,p)=\frac{[(p-1)N-p]^2+p^2(p-2) -p^2(p-1)N+2p^2\sqrt{(p-1)(N-1)}}{(N-p)[(p-1)N-p(p+3)]}\,.
\end{equation}
We refer to  \cite{DFSV,Fa2} and the references therein for more details and we only note here that
the exponent $q_c(N,p)$ is larger than the classical  critical Sobolev exponent.
Once that, by Theorem \ref{thm:monotoniadellau}, we know  that the solutions are monotone (and therefore stable), then the same proof of \cite[Theorem 4]{FMS3} provides the following Liouville-type result:
\begin{theorem}\label{liouvillenextgenerationtris}
Let  $p>2$ and let $u\in C^{1,\alpha}_{loc}({\overline {{\mathbb{R}^N_+}}})$ be a non-negative
weak solution of~\eqref{E:P} in~$\R^N_+$ with $|\nabla u|\in L^\infty (\R^N_+)$ and
$$f(u)=u^q.$$
Assume that
$$
\begin{cases}
&(p-1) < q <\infty,\quad\quad\quad \,\,\,\,\text{if }\qquad \displaystyle N \leqslant  \frac{p(p+3)}{p-1},\\
&(p-1) < q < q_c(N,p), \quad \text{if }\qquad \displaystyle  N >\frac{p(p+3)}{p-1}\,,\end{cases}$$

\noindent then $u=0$.\\

\noindent If moreover we assume that $u$ is bounded, then it follows that $u=0$ assuming only that
$$
\begin{cases}
&(p-1) < q <\infty,\quad\quad\quad \quad\qquad\,\,\,\,\text{if }\qquad \displaystyle(N-1) \leqslant  \frac{p(p+3)}{p-1},\\
&(p-1) < q < q_c((N-1),p), \quad \,\,\text{if }\qquad \displaystyle (N-1) >\frac{p(p+3)}{p-1}\,.\end{cases}$$
\end{theorem}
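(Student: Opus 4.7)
The plan is to combine the new monotonicity result Theorem \ref{thm:monotoniadellau} with the stability-based Liouville machinery developed in \cite{DFSV, Fa2, FMS3}. Since $f(u)=u^q$ with $q>p-1$ is positive, locally Lipschitz on $[0,\infty)$, and satisfies $f(t)/t^{p-1}=t^{q-p+1}\to 0$ at $0$, the hypothesis $(h_f)$ holds. If $u\not\equiv 0$, the strong maximum principle of \cite{V} forces $u>0$, and Theorem \ref{thm:monotoniadellau} gives $\partial_y u > 0$ in $\mathbb{R}^N_+$. Formally differentiating the equation in the $y$-direction shows that $v:=\partial_y u$ is a positive weak solution of the linearized $p$-Laplace operator
\begin{equation*}
-\dvg\bigl(|\nabla u|^{p-2}\nabla v+(p-2)|\nabla u|^{p-4}(\nabla u\cdot\nabla v)\nabla u\bigr) = q\,u^{q-1} v,
\end{equation*}
and the standard Picone/cutoff argument (cf. \cite{DFSV, FSV}) then yields the stability inequality
\begin{equation*}
\int_{\mathbb{R}^N_+} q\,u^{q-1}\phi^2 \,\leq\, \int_{\mathbb{R}^N_+} |\nabla u|^{p-2}|\nabla\phi|^2 + (p-2)|\nabla u|^{p-4}(\nabla u\cdot\nabla\phi)^2
\end{equation*}
for every $\phi$ compactly supported in $\mathbb{R}^N_+$. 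Thus $u$ is a positive \emph{stable} solution.

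With stability at our disposal, the problem reduces to a classification of stable Lane--Emden solutions, and I would literally follow \cite[Theorem 4]{FMS3}: test the equation with $\phi=u^{2\beta+1}\eta^m$ for a large cutoff $\eta$, combine with the stability inequality applied to $\psi=u^{\beta+1}\eta^m$, and optimize over $\beta$. The Farina threshold $q_c(N,p)$ is precisely the exponent at which the admissible range of $\beta$ collapses; below it, Hölder and absorption arguments yield
\begin{equation*}
\int_{B_R^+} u^{q+2\beta+1}\,dx \,\leq\, C\, R^{\,N - p\frac{q+2\beta+1}{q-p+1}},
\end{equation*}
and letting $R\to\infty$ with a suitable choice of $\beta$ forces $u\equiv 0$. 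When $N\leq p(p+3)/(p-1)$ the range of admissible $\beta$ is not constrained by dimension and the same computation closes for every $q>p-1$, giving the first dichotomy in the statement.

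For the bounded case one reduces the effective dimension by one via the infinity profile. Since $u$ is bounded and $\partial_y u>0$, the limit $u_\infty(x'):=\lim_{y\to\infty} u(x',y)$ exists and is bounded. The vertical translates $u_n(x',y):=u(x',y+n)$ are uniformly bounded in $C^{1,\alpha}_{\rm loc}(\overline{\mathbb{R}^N_+})$ by \cite{Di,Li,T} together with $|\nabla u|\in L^\infty$, hence along a subsequence $u_n\to u_\infty$ in $C^{1,\beta'}_{\rm loc}$ for some $\beta'<\alpha$. The convergence of the $p$-harmonic flux $|\nabla u_n|^{p-2}\nabla u_n$ (in the spirit of \cite{DS1}) lets me pass to the limit in the weak formulation and identify $u_\infty$ as a bounded nonnegative weak solution of $-\Delta_p u_\infty = u_\infty^q$ on $\mathbb{R}^{N-1}$; stability is inherited from $u$ via the same test-function argument together with Fatou. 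The stable-solution Liouville theorem of \cite{DFSV, Fa2} on $\mathbb{R}^{N-1}$ then applies, giving $u_\infty \equiv 0$, and the sandwich $0\leq u(x',y)\leq u_\infty(x')$ yields $u\equiv 0$.

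The main obstacle I anticipate is not the Liouville machinery itself, which is essentially quoted from \cite{FMS3, DFSV, Fa2}, but rather two technical points tied to the degeneracy $p>2$: first, making the stability inequality rigorous across the critical set $\{|\nabla u|=0\}$, which is handled by the weighted Sobolev framework of \cite{DS1} emphasized in the introduction; second, in the bounded case, identifying the infinity profile $u_\infty$ as a weak solution on $\mathbb{R}^{N-1}$ demands the strong-type local convergence of $|\nabla u_n|^{p-2}\nabla u_n$, which once again appeals to the weighted estimates of \cite{DS1} and has to be executed carefully because along $y$-derivatives the weight $|\nabla u_n|^{p-2}$ may degenerate in the limit.
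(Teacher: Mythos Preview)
Your proposal is correct and follows exactly the route the paper indicates: the paper does not give a self-contained proof but simply observes that once Theorem~\ref{thm:monotoniadellau} yields $\partial_y u>0$ (hence stability, via \cite{DFSV,FSV}), ``the same proof of \cite[Theorem 4]{FMS3}'' applies verbatim. Your sketch of that proof --- the Farina-type integral estimates in $\R^N_+$ for the first part, and the passage to the $(N-1)$-dimensional limiting profile $u_\infty$ in the bounded case --- is precisely what \cite[Theorem 4]{FMS3} does, and the technical concerns you flag (weighted Sobolev framework for stability across $\{\nabla u=0\}$, convergence of the flux) are already handled in the cited references.
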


\

\noindent The paper is organized as follows. In Section \ref{secpre} we recall some known results for the reader's covenience. In Section \ref{secmain} we prove some preliminary results and then we prove Theorem~\ref{thm:monotoniadellau} and Theorem \ref{mainthmnonpositive}.

\section{Preliminaries}\label{secpre}
We start stating    some notations and   preliminary results. Generic fixed and numerical constants will be denoted by
$C$ (with subscript in some case) and they will be allowed to vary within a single line or formula.
\\

\noindent For $0\leq \alpha<\beta$, define the strip $\Sigma_{(\alpha,\beta)}$ as
\begin{equation}\label{eq:strippppppp}
\Sigma_{(\alpha,\beta)}:= \mathbb{R}^{N-1}\times (\alpha,\beta)
\end{equation}
and   we will indicate with $\Sigma_\beta$    the strip
\begin{equation}\nonumber
\Sigma_{\beta}:= \mathbb{R}^{N-1}\times (0,\beta).
\end{equation}
Then   we define the cylinder
\begin{equation}\label{eq:cylllllllinder}
\mathcal{C}_{(\alpha, \beta)}(R)=\mathcal{C}(R):=\Sigma_{(\alpha, \beta)}\cap \{B^{'}(0,R)\times \mathbb{R}\}\,,
\end{equation}
where $B^{'}(0,R)$ is the ball   in $\mathbb{R}^{N-1}$ of radius $R$ and center at zero.
Given $\lambda \in \mathbb{R}$ we will define   $u_\lambda(x)$  by  \begin{equation}\label{eq:reflected}
u_\lambda(x)=u_\lambda(x',y)\,:=\, u(x',2\lambda-y)\,\quad\text{in}\,\,\, \Sigma_{2\lambda}\,.
\end{equation}
Finally we use the notation
$$u^+:=\max\{u,0\}.$$
In the sequel of the paper   we will often use the strong maximum principle.  We refer to    \cite{V}   (see also  \cite{PSB}) and we recall here the statement.
\begin{theorem}(Strong Maximum Principle and Hopf's Lemma).\label{semihop}
 Let $\Omega$ be a domain in $\mathbb{R}^N$ and suppose that $u \in C^1(\Omega)$, $u \geqslant 0$
 in $\Omega$, weakly solves
 \[
 -\Delta_p u+cu^q=g \geqslant 0 \quad \mbox{in  }\quad \Omega\,,
 \]
 with $1 < p < \infty$, $q \geqslant p-1$, $c \geqslant 0$ and $g \in L^\infty_{loc}(\Omega)$. If
 $u \neq 0$ then $u >0$ in $\Omega$. Moreover for any point $x_0 \in \partial \Omega$ where the
 interior sphere condition is satisfied, and such that $u \in C^1(\Omega \cup \{x_0\})$ and
 $u(x_0)=0$ we have that $\frac{\partial u}{\partial s}>0$ for any inward directional derivative
 (this means that if $y$ approaches $x_0$ in a ball $B \subseteq \Omega$ that has $x_0$ on its
 boundary, then $\displaystyle \lim_{y \rightarrow x_0}\frac{u(y)-u(x_0)}{|y-x_0|}>0$).
\end{theorem}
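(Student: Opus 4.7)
The plan is to deduce both the strong maximum principle (SMP) and Hopf's lemma from a V\'azquez-type radial barrier argument, combined with the weak comparison principle for the operator $-\Delta_p\cdot+c(\cdot)^q$. I would first establish the SMP by contradiction: assuming $u\not\equiv 0$ in $\Omega$ yet $u(x_\ast)=0$ for some $x_\ast\in\Omega$, continuity and connectedness of $\Omega$ yield a ball $B_R(\bar x)\subset\{u>0\}$ touching the zero set at some point $x_0\in\partial B_R(\bar x)\cap\Omega$; by compactness and continuity, $u\geq m>0$ on the inner sphere $\partial B_{R/2}(\bar x)$ for some $m$.

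In the annulus $A=B_R(\bar x)\setminus\overline{B_{R/2}(\bar x)}$ I would use the radial barrier
\[
w(x)=\eta\bigl(e^{-\alpha|x-\bar x|^2}-e^{-\alpha R^2}\bigr),
\]
with $\alpha>0$ to be chosen large and $\eta>0$ small. A direct computation gives, writing $r=|x-\bar x|$,
\[
-\Delta_p w=(2\eta\alpha)^{p-1}r^{p-2}e^{-(p-1)\alpha r^2}\bigl[(N+p-2)-2(p-1)\alpha r^2\bigr],
\]
so for $\alpha$ sufficiently large (depending only on $R,N,p$) the bracket is negative on $A$ and one gets $-\Delta_p w\leq -C_1\eta^{p-1}$ for some $C_1>0$. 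Since $cw^q\leq c\eta^q$ and $q\geq p-1$, a small choice of $\eta$ (and, when $q=p-1$, a further enlargement of $\alpha$ to dominate the constant $c$) produces $-\Delta_p w+cw^q\leq 0\leq g$ weakly in $A$; further shrinking $\eta$ guarantees $w\leq m\leq u$ on $\partial B_{R/2}(\bar x)$, while $w=0\leq u$ on $\partial B_R(\bar x)$. Weak comparison for the operator $-\Delta_p\cdot+c(\cdot)^q$ then gives $u\geq w$ throughout $A$. Since $u(x_0)=w(x_0)=0$ and $w$ has strictly negative outward radial derivative at $r=R$, the inward normal derivative of $u$ at $x_0$ is bounded below by $-w'(R)>0$; but $x_0\in\Omega$ is an interior minimum of the $C^1$ function $u$, forcing $\nabla u(x_0)=0$, a contradiction. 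Hopf's lemma is then obtained by exactly the same barrier: the interior sphere condition at $x_0\in\partial\Omega$ provides a ball $B_R(\bar x)\subset\Omega$ touching $\partial\Omega$ only at $x_0$, and the comparison $u\geq w$ in $A$ yields $\partial u/\partial s(x_0)\geq -w'(R)\cos\theta>0$ for every inward direction $s$ making an angle $\theta<\pi/2$ with the inward normal.

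The step I expect to be the main obstacle is the weak comparison itself, because $-\Delta_p$ is degenerate wherever $\nabla w$ or $\nabla u$ vanishes and no pointwise ellipticity is available. One handles this by testing the difference of the two weak inequalities against the admissible test function $(w-u)^+\in W_0^{1,p}(A)$ and exploiting the strict monotonicity of the vector field $\xi\mapsto|\xi|^{p-2}\xi$ together with the monotonicity of $s\mapsto cs^q$; the latter is precisely why the structural hypothesis $q\geq p-1$ enters, and it is the same mechanism that allows the lower-order term to be absorbed by the leading-order barrier in the construction above.
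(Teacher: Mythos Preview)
Your argument is the classical V\'azquez barrier proof and is correct. Note that the paper does not actually prove Theorem~\ref{semihop}: it is recalled in the preliminaries with a reference to \cite{V} (see also \cite{PSB}), so your proposal is precisely the argument the paper is invoking.

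One small point of exposition: the weak comparison step you describe (testing the difference against $(w-u)^+$ and using monotonicity of $\xi\mapsto|\xi|^{p-2}\xi$ together with that of $s\mapsto cs^q$) works for \emph{any} $q>0$, since $s\mapsto cs^q$ is nondecreasing on $[0,\infty)$ regardless of whether $q\geq p-1$. The structural hypothesis $q\geq p-1$ is needed not in the comparison principle but in the barrier construction, to guarantee that $cw^q=O(\eta^q)$ scales no worse than $-\Delta_p w\sim\eta^{p-1}$ as $\eta\to0$, so that the lower-order term can be absorbed; in the borderline case $q=p-1$ this is exactly what allows the further enlargement of $\alpha$ you mention. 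This is consistent with V\'azquez's integral condition $\int_0(F(s))^{-1/p}\,ds=+\infty$, which for $F(s)=\tfrac{c}{q+1}s^{q+1}$ reduces precisely to $q\geq p-1$.
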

Let us recall that the linearized operator  $L_u(v,\varphi)$ at a fixed solution $u$ of $-\Delta_p (u)=f(u)$ is well
defined, for every  $v\, ,\, \varphi\in H^{1,2}_{\rho}(\Omega)$  with $\rho\equiv |\nabla u|^{p-2}$, by \ \ \
$$
L_u(v,\varphi) \equiv\int_{\Omega}[|\nabla u|^{p-2}(\nabla v,\nabla \varphi)+(p-2)|\nabla u|^{p-4}(\nabla u,\nabla v)(\nabla u,\nabla \varphi) - f'(u)v\varphi]\,.
$$
We refer \cite{DS1} for more details and in particular for the definition of the weighted Sobolev spaces involved. Let us only recall here that the space $H^{1,2}_{\rho}(\Omega)$ can be defined as the space of functions $v$ such that $\|v\|_{H^{1,2}_{\rho}(\Omega)}$ is bounded and
\[
\|v\|_{H^{1,2}_{\rho}(\Omega)}\,:=\, \|v\|_{L^2(\Omega)}\,+\|\nabla v\|_{L^2(\Omega,\rho)}\,.
\]
This is the same space obtained performing the completion of smooth functions under the norm above. The space
$H^{1,2}_{0,\rho}(\Omega)$ is obtained taking the closure of $C^\infty_c(\Omega)$ under the same norm and $\|\nabla v\|_{L^2(\Omega,\rho)}$ is an equivalent norm in $H^{1,2}_{0,\rho}(\Omega)$.\\

\noindent Moreover, $v\in H^{1,2}_{\rho}(\Omega)$  is a weak solution of the linearized equation if
\begin{equation}\nonumber%\label{EQ:LIN}
L_u(v,\varphi)=0
\end{equation}
for any $\varphi\in H^{1,2}_{0,\rho}(\Omega)$.  By \cite{DS1} we have that $u_{x_i}\in H^{1,2}_{\rho}(\Omega)$
 for $i=1,\ldots ,N$, and
$L_u(u_{x_i},\varphi)$ is well defined
 for every $\varphi\in  H^{1,2}_{0,\rho}(\Omega)$, with
\begin{equation}\nonumber%\label{hthLI:VI}
L_u(u_{x_i},\varphi) =0\quad\quad\forall \varphi\in H^{1,2}_{0,\rho}(\Omega).
\end{equation}
In other words, the derivatives of $u$ are weak solutions of the linearized equation. Consequently by the strong maximum principle for the linearized operator (see \cite{DS2}) we have the following
\begin{theorem}\label{hthPMFderrr}
Let  $u\in C^1(\overline{\Omega})$  be a weak solution of $-\Delta_p (u)=f(u)$ in a bounded smooth domain $\Omega$ of
$\mathbb{R}^N$ with $\frac{2N+2}{N+2}<p<\infty$, and $f$ positive ($f(s)>0$ for $s>0$) and locally Lipschitz continuous. Then, for any $i \in \{1,
\dots ,N \}$ and any domain $\Omega '\subset\Omega$ with $ u_{x_{i}}\geqslant 0$ in $\Omega '$, we have that
either $u_{x_{i}} \equiv 0$ in $\Omega '$ or $u_{x_{i}} >0$ in $\Omega '$.
\end{theorem}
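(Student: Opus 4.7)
The plan is to derive the dichotomy as an instance of a strong maximum principle for the weighted linear elliptic equation satisfied by $v := u_{x_i}$. Writing $\rho := |\nabla u|^{p-2}$ and
\[
A(x) := |\nabla u|^{p-2}\, I + (p-2)\,|\nabla u|^{p-4}\,\nabla u \otimes \nabla u
\]
(set to $0$ on the critical set $Z := \{\nabla u = 0\}$), the identity $L_u(v,\varphi) = 0$ for every $\varphi \in H^{1,2}_{0,\rho}(\Omega')$ is equivalent, in divergence form, to
\[
-\operatorname{div}\!\bigl(A(x)\,\nabla v\bigr) \;=\; f'(u)\,v \qquad \text{weakly in } \Omega'.
\]
The matrix $A$ is symmetric and satisfies the two-sided bound
\[
\min(1,p-1)\,\rho(x)\,|\xi|^2 \;\leq\; \langle A(x)\xi,\xi\rangle \;\leq\; \max(1,p-1)\,\rho(x)\,|\xi|^2, \qquad \xi \in \mathbb{R}^N,
\]
so the linearized equation is linear elliptic with (possibly degenerate) weight $\rho$.

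First I would establish the dichotomy away from $Z$. On any open $U \Subset \Omega' \setminus Z$, continuity of $\nabla u$ gives $\rho$ bounded between positive constants, while $u \in C^1(\overline{\Omega})$ together with local Lipschitz continuity of $f$ gives $f'(u) \in L^\infty(U)$. Therefore the equation is uniformly elliptic with bounded measurable coefficients on $U$, and the classical strong maximum principle (Trudinger's Harnack inequality) yields: if $v \geq 0$ on $U$ and $v(x_0) = 0$ for some $x_0 \in U$, then $v \equiv 0$ on the connected component of $U$ containing $x_0$.

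The delicate step, and the main obstacle, is to propagate zero across the critical set $Z$. This is where the assumption $p > \tfrac{2N+2}{N+2}$ enters: it is the precise threshold under which, for $C^{1,\alpha}$ solutions of $-\Delta_p u = f(u)$ with $f > 0$, the weight $\rho$ satisfies a local Muckenhoupt-type condition sufficient to run a weighted Harnack inequality. Granting this (the content of \cite{DS1,DS2}, proved by combining a refined Caccioppoli estimate, a weighted Sobolev--Poincar\'e inequality with weight $\rho$, and a Moser iteration), one obtains, on every closed ball $\overline{B_R(x_0)} \Subset \Omega'$, an estimate
\[
\sup_{B_R(x_0)} v \;\leq\; C\,\inf_{B_R(x_0)} v
\]
with $C$ independent of $v$ (the zero-order term $f'(u)v$ being absorbed through the $L^\infty$ bound on $f'(u)$).

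The conclusion then follows from a routine chaining argument: the set $\{v = 0\}$ is relatively open in $\Omega'$ by the above Harnack estimate (if $v(x_0) = 0$ then $\sup_{B_R(x_0)} v = 0$) and relatively closed by continuity of $v$; since $\Omega'$ is a domain, either $\{v = 0\} = \Omega'$ or $\{v = 0\} = \emptyset$, giving exactly the claimed dichotomy. The genuinely nontrivial input is the weighted Harnack inequality, whose proof essentially uses the hypothesis on $p$ to control the integrals $\int \rho^{-s}$ and $\int \rho^{t}$ needed to start and close the Moser iteration; once one has it in the form available in \cite{DS2}, the rest of the argument is formal.
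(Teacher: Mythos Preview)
Your proposal is correct and matches the paper's treatment: the paper does not give an independent proof of this theorem but simply cites it from \cite{DS2} (the strong maximum principle for the linearized operator), and what you have sketched is precisely the strategy of \cite{DS2} --- linearized equation with weight $\rho=|\nabla u|^{p-2}$, weighted Harnack inequality under the threshold $p>\frac{2N+2}{N+2}$, and the standard open--closed connectedness argument.
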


We state now the Weighted Poincar\'e type  inequality  proved in \cite{DS1} that will be useful in  the sequel.

\begin{theorem}[Weighted Poincar\'e type inequality]\label{thm:Poincar}
Let $w\in H^{1,2}_{\rho}(\Omega)$ be such that
\begin{equation}\label{eq:hip_real}|w(x)|\leq \hat C \int_{\Omega}\frac{|\nabla w(y)|}{|x-y|^{N-1}}dy,
\end{equation}with  $\Omega$ a bounded domain and $\hat C$ a positive constant. Let $\rho$ be  a  weight function such that
\begin{eqnarray}\label{eq:weight}
\int_{\Omega} \frac{1}{\rho^\tau|x-y|^{\gamma}}\,dy\leq C^*,\qquad\text{for any}\quad x\in\Omega
\end{eqnarray}
with $\max\{(p-2)\, ,\,0\}\leqslant \tau<p-1$, $\gamma< N-2$ $(\gamma =0 \,\,\text{if}\,\, N=2)$.
Then
\begin{equation}\label{eq:poincareineq}
\int_{\Omega}w^2\leq {C_p}\int_{\Omega}\rho|\nabla w|^2,
\end{equation}
where $C_p={C_p}(d,C^*)$, with $d=\text{diam}\, (\Omega)$. Futhermore
\begin{equation}\nonumber%\label{eq:cpvaazero}
C_p\rightarrow 0\quad  \text{if} \,\,  d \rightarrow 0.
\end{equation}
\end{theorem}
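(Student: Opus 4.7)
The plan is to deduce the Poincar\'e-type inequality directly from the pointwise Riesz-potential hypothesis \eqref{eq:hip_real}, using a generalized H\"older inequality tuned to the weighted integrability assumption \eqref{eq:weight}. First I would square \eqref{eq:hip_real},
$$|w(x)|^2 \leq \hat C^2 \left(\int_\Omega \frac{|\nabla w(y)|}{|x-y|^{N-1}}\,dy\right)^2,$$
and estimate the inner integral by a three-factor H\"older applied in the $y$ variable. The integrand $|\nabla w(y)|\,|x-y|^{-(N-1)}$ should be decomposed as
$$\bigl(\rho^{1/2}(y)|\nabla w(y)|\bigr) \cdot \bigl(\rho^{-\tau}(y)|x-y|^{-\gamma}\bigr)^{\beta} \cdot |x-y|^{-\delta},$$
with parameters $\beta,\delta$ forced by the algebraic identities $\tfrac12 - \tau\beta = 0$ and $\gamma\beta + \delta = N-1$. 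After H\"older with exponents $s_1 = 2$, $s_2 = 1/\beta$, and $s_3$ determined by $\sum 1/s_i = 1$, the first factor contributes the desired $\int \rho|\nabla w|^2$, the middle factor (raised to the $s_2$-th power) is the hypothesis weight and is thus bounded by $C^*$, and the last leaves only a local unweighted kernel $\int_\Omega |x-y|^{-\delta s_3}\,dy$.

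Integrating the resulting pointwise bound in $x$ over $\Omega$ would then yield
$$\int_\Omega w^2 \leq C_p \int_\Omega \rho|\nabla w|^2,$$
with $C_p = C_p(d,C^*)$. The decay $C_p \to 0$ as $d = \mathrm{diam}(\Omega) \to 0$ follows from the elementary estimate $\int_\Omega |x-y|^{-\delta s_3}\,dy \leq C\,d^{\,N - \delta s_3}$, which vanishes whenever $\delta s_3 < N$ and propagates through the Poincar\'e constant.

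The main obstacle I expect is the simultaneous admissibility bookkeeping of the H\"older exponents: one has to verify $s_1,s_2,s_3 \geq 1$ and $\delta s_3 < N$ while the hypothesis parameters are restricted to $\max\{p-2,0\} \leq \tau < p-1$ and $\gamma < N-2$. The strict inequality $\gamma < N-2$ (beyond the naive integrability threshold $\gamma < N$) is precisely what provides the margin to keep $|x-y|^{-\delta s_3}$ locally integrable, and the condition $\tau < p-1$ keeps $s_2 = 2\tau$ compatible with the remaining Hölder constraints. In parameter regimes where the naive choice degenerates (for instance when $\tau$ approaches $1$ so that $s_3 = 2\tau/(\tau-1)$ blows up), one takes $\beta$ strictly smaller than the borderline value $1/(2\tau)$ and rebalances $\delta$, exploiting the strict margin in $\tau$ and $\gamma$. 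The endpoint $N = 2$ (forcing $\gamma = 0$) requires a logarithmic modification of the same bookkeeping, reflecting the borderline nature of the two-dimensional Riesz kernel.
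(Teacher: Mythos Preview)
The paper does not prove this theorem: it is stated in Section~\ref{secpre} as a result ``proved in \cite{DS1}'' and the remark following it refers the reader to \cite{DS1} and to Theorem~8 and Corollary~2 of \cite{FMS3} for the precise tracking of constants. So there is no proof in the paper to compare against; the result is imported as a black box.

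On your proposal itself: the overall strategy (H\"older on the Riesz kernel, split so that one factor reproduces the weighted hypothesis~\eqref{eq:weight}) is indeed the mechanism behind the Damascelli--Sciunzi argument, but the specific three-factor split you write down does not close. With $\beta=1/(2\tau)$ you get $s_2=2\tau$ and $s_3=2\tau/(\tau-1)$, which already forces $\tau>1$; worse, the integrability requirement $\delta s_3<N$ becomes
\[
\frac{2\tau(N-1)-\gamma}{\tau-1}<N
\quad\Longleftrightarrow\quad
\tau(N-2)<\gamma-N,
\]
and for $N\ge3$ the right side is negative (since $\gamma<N-2$) while the left is nonnegative, so the inequality is never satisfied. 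Your proposed fix of taking $\beta<1/(2\tau)$ leaves a residual factor $\rho^{1/2-\tau\beta}$ with \emph{positive} exponent, and nothing in the statement of the theorem bounds $\rho$ from above; that bound is available in the application (where $\rho=|\nabla u|^{p-2}$ and $|\nabla u|\in L^\infty$) but not in the abstract result you are asked to prove.

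The actual argument in \cite{DS1} avoids this by distributing part of the kernel $|x-y|^{-(N-1)}$ onto the $\rho|\nabla w|^2$ factor as well, then integrating in $x$ \emph{before} collapsing the estimate (a Schur/Fubini step), so that one only ever needs $\int_\Omega|x-y|^{-(N-1)}\,dx\le Cd$ on the gradient side. Your outline treats $\int\rho|\nabla w|^2$ as a constant pulled out immediately after H\"older, which is exactly what makes the remaining kernel too singular.
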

We remark  that, for the sake of simplicity and for the reader's convenience, here  we write  explicitly   the dependence of  $C_p$ on the parameters that in the sequel will play a crucial role and that we need to control. The other parameters involved are fixed in our application and we refer the readers to Theorem 8 and to Corollary 2 in Section 5 of  \cite{FMS3} (see also \cite{DS1}).\\

\noindent We will use the weighted Poincar\'e type inequality with $\rho=|\nabla u|^{p-2}$. Next proposition  gives some sufficient conditions in order to satisfy \eqref{eq:weight}.
\begin{proposition}\label{pro:SobConstant}
Let $1<p<\infty$ and  $u\in C^{1,\alpha}(\Omega)$ a  weak  solution to
\begin{equation}\nonumber
-\Delta_p u=h(x) \quad \text{in}\,\, \Omega,
\end{equation}
with $h\in W^{1,\infty} (\Omega)$.
 Let $\Omega'\subset \subset \Omega$ and $0<\delta <dist(\Omega',\partial\Omega)$ and assume that $h>0$ in $\overline{\Omega'_\delta}$, where
$$\Omega'_\delta=\{x\in \Omega : d(x,\Omega')< \delta\}\subset \subset \Omega.$$
Let us fix $\beta_1, \beta_2$ such that
\begin{equation}\nonumber%\label{eq: positiveinfdist}
\underset{x\in \Omega'_\delta}{\inf}\, h(x)\geq \beta_1>0 \quad \text{and}\quad \delta\geq\beta_2>0.
\end{equation}
 Then there exits a positive constant $C^*=C^*(\beta_1,\beta_2)$  such that
\begin{equation}\nonumber %\label{drdrdbisssete}
\begin{split}
&\int_{\Omega'} \frac{1}{|\nabla u|^{\tau}}\frac{1}{|x-y|^\gamma}\leqslant C^*,
\end{split}
\end{equation}
with $\max\{(p-2)\, ,\,0\}\leqslant \tau<p-1$.
\end{proposition}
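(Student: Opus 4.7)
The proof reduces to an integrability estimate on negative powers of $|\nabla u|$, after which the mild singularity of $|x-y|^{-\gamma}$ (note $\gamma<N-2<N$) can be handled by H\"older on a dyadic decomposition.

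\emph{Step 1: Gradient integrability.} I would first establish the auxiliary estimate
\[
\int_{\Omega'}|\nabla u|^{-s}\,dy\,\le\,C(\beta_1,\beta_2,\|u\|_{C^{1,\alpha}},N,p)\qquad\text{for every }s<p-1,
\]
which is essentially \cite[Theorem 2]{DS1} (see also Corollary 2 in Section 5 of \cite{FMS3}) reread quantitatively in terms of $\beta_1$ and $\beta_2$. Its proof consists in testing $-\Delta_p u=h$ with $\varphi=(|\nabla u|^2+\epsilon)^{-s/2}\eta^2$ for a smooth cutoff $\eta$ supported in $\Omega'_\delta$ and equal to $1$ on $\Omega'$; the lower bound $h\ge\beta_1>0$ on $\overline{\Omega'_\delta}$ is precisely what allows one to absorb the leading term and to pass to the limit as $\epsilon\to 0$. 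Tracking the constants shows that the bound depends on $u$ only through $\|u\|_{C^{1,\alpha}}$, on $\beta_1$, and on the geometry of $\Omega'_\delta$ (hence on $\beta_2$).

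\emph{Step 2: Dyadic decomposition and H\"older.} With Step 1 in hand, fix $x\in\Omega$ and decompose $\Omega'$ into the dyadic annuli
\[
A_k\,:=\,\Omega'\cap\bigl(B(x,2^{-k+1}d)\setminus B(x,2^{-k}d)\bigr),\qquad k\ge 0,
\]
where $d=\operatorname{diam}(\Omega')$. On each $A_k$ one has $|x-y|^{-\gamma}\le (2^{-k}d)^{-\gamma}$. Choose an exponent $q>1$ with $\tau q<p-1$ (possible since $\tau<p-1$) whose conjugate $q'$ satisfies $N/q'>\gamma$ (possible since $\gamma<N-2$, and the two conditions are compatible because $\tau/(p-1)<1$ and $\gamma/N<1-2/N$ leave enough room). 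By H\"older and Step 1,
\[
\int_{A_k}|\nabla u|^{-\tau}\,dy\,\le\,\Bigl(\int_{\Omega'}|\nabla u|^{-\tau q}\,dy\Bigr)^{\!1/q}|A_k|^{1/q'}\,\le\,C\,(2^{-k}d)^{N/q'},
\]
so that multiplying by $(2^{-k}d)^{-\gamma}$ and summing yields the geometric series $\sum_{k\ge 0}2^{k(\gamma-N/q')}$, which converges thanks to the choice of $q'$. This produces the uniform bound $C^*=C^*(\beta_1,\beta_2)$.

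\emph{Main obstacle.} The crux is Step 1: converting $h\ge\beta_1$ into a measure-theoretic control on the set $\{|\nabla u|\le t\}$. Although $u\in C^{1,\alpha}$ and $h>0$, $u$ may still have isolated critical points in $\Omega'$, so the bound on $\int|\nabla u|^{-s}$ is not a pointwise consequence of the positivity of $h$; one needs a careful choice of test function in the PDE together with the regularization $(|\nabla u|^2+\epsilon)^{-s/2}$, and delicate bookkeeping to ensure the final constant depends only on $\beta_1$ and $\beta_2$, as required for the uniform applicability of the weighted Poincar\'e inequality \eqref{eq:poincareineq} in the moving-plane argument of the main theorem.
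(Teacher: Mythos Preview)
The paper does not prove this proposition in the text; it defers to \cite{DS1} and, for the quantitative dependence on $\beta_1,\beta_2$, to Proposition~1 in Section~4 of \cite{FMS3}. Your two-step strategy, however, does not reproduce that result: Step~2 contains a genuine gap. The H\"older exponent $q$ you seek must satisfy simultaneously $1/q>\tau/(p-1)$ (so that $\tau q<p-1$) and $1/q<1-\gamma/N$ (so that $N/q'>\gamma$), which forces
\[
\frac{\tau}{p-1}+\frac{\gamma}{N}\,<\,1.
\]
Your assertion that ``$\tau/(p-1)<1$ and $\gamma/N<1-2/N$ leave enough room'' is false: those two inequalities allow the sum to be as large as $2-2/N$. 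Concretely, take $N=4$, $p=5$, $\tau=p-2=3$ and $\gamma=3/2<N-2$; then $\tau/(p-1)+\gamma/N=9/8>1$, no admissible $q$ exists, and the dyadic series diverges. The decoupled route --- first bound $\int|\nabla u|^{-s}$, then append the Riesz kernel by H\"older --- therefore cannot cover the full range $\max\{p-2,0\}\le\tau<p-1$, $\gamma<N-2$ required for the weighted Poincar\'e inequality of Theorem~\ref{thm:Poincar}.

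The argument in \cite{DS1} avoids this obstruction by carrying the kernel $|x-y|^{-\gamma}$ inside the test function from the outset. One first establishes the weighted Hessian bound $\int_{\Omega'}|\nabla u|^{p-2-\beta}|D^2u|^2\,|x-y|^{-\gamma}\,dy\le C$ for $\beta<1$ and $\gamma<N-2$ (working with regularised approximating solutions $u_\varepsilon$, since $u$ is only $C^{1,\alpha}$), and then tests the equation against a function of the form $(\varepsilon+|\nabla u_\varepsilon|)^{-\tau}|x-y|^{-\gamma}\eta^2$. The positivity $h\ge\beta_1$ produces the desired integral on the left; on the right, the second-derivative terms are absorbed (after Young's inequality, with $\beta=\tau-(p-2)\in[0,1)$) by the Hessian estimate, and the term coming from $\nabla_y|x-y|^{-\gamma}$ is controlled because $\gamma+1<N$. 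No coupling between $\tau$ and $\gamma$ arises. Your Step~1 is essentially the $\gamma=0$ shadow of this computation, but note that even there the test function $(|\nabla u|^2+\epsilon)^{-s/2}\eta^2$ is not directly admissible for a $C^{1,\alpha}$ solution --- its gradient involves $D^2u$ --- and the integration by parts already generates Hessian terms that your sketch does not address.
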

\begin{remark}
The proof of Proposition \ref{pro:SobConstant} follows by \cite{DS1} (see also \cite{Sci1,Sci2}). Actually for the version that we stated here  we refer to Proposition 1 in Section 4 of \cite{FMS3}. Let us also point out that, as above, we prefer to omit the dependence of the constant $C^*$ on other parameters that are fixed and therefore not relevant in our application.
\end{remark}

Later we will frequently exploit the classical Harnack inequality for $p$-Laplace equations. We refer to \cite{PSB}[Theorem 7.2.1] and the references therein. At some point, as it will be clear later, it will be crucial the use of a boundary type Harnack inequality. Therefore
we state here an adapted version of the  more general and deep result of M.F. Bidaut-V\'eron, R. Borghol and L. V\'eron, see Theorem 2.8 in \cite{bidaut}.
\begin{theorem}[Boundary Harnack Inequality]\label{thm:BoundaryHarnack}
Let $R_0>0$ and  define the cylinder $\mathcal C_{(0,L)}(2R_0)$ and let $u$ be such that
\begin{equation}\nonumber
-\Delta_pu=c(x)u^{p-1}\qquad\text{in}\quad \mathcal C_{(0,L)}(2R_0),
\end{equation}
 with $u$ vanishing on  $\mathcal C_{(0,L)}(2R_0)\cap\{y=0\}$ and with  $\|c(x)\|_{L^{\infty}(C_{(0,L)}(2R_0))}\leq C_0$. Then
\begin{equation}\nonumber
\frac 1C \frac{u(z_2)}{\rho (z_2)}\leq \frac{u(z_1)}{\rho (z_1)}\leq C \frac{u(z_2)}{\rho (z_2)},\quad \forall \, z_1,z_2\in B_{R_0}\cap \mathcal C_{(0,L)}(2R_0) \, : \, 0<\frac{|z_2|}{2}\leq |z_1|\leq 2|z_2|\,,
\end{equation}
where $C=C(p,N,C_0)$ and $\rho(\cdot)$ is the distance function to $\partial\mathbb{R}^N_+$ .
\end{theorem}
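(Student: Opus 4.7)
Since the conclusion is presented as an adaptation of Theorem 2.8 in \cite{bidaut}, my plan mirrors the classical boundary-Harnack strategy (known for harmonic functions since Dahlberg, Caffarelli--Fabes--Mortola--Salsa) transported to the $p$-Laplace equation with an $L^\infty$ lower-order term. The idea is to combine the interior Harnack inequality of \cite{PSB} with a quantitative two-sided boundary behavior estimate of the form $u(z) \sim \rho(z)\, u(z^*)/|z|$, where $z^{*}=(z',\eta|z|)$ is a \emph{corkscrew point} at a fixed relative height $\eta>0$.

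First, I would set up a scaling reduction. Writing $u_r(x)=u(rx)$ one sees that $-\Delta_p u_r = r^{p} c(rx)\, u_r^{p-1}$, so shrinking to small scales only improves the bound on the coefficient: $\|r^{p}c(r\cdot)\|_\infty \le r^{p}C_0$. This lets me fix a single reference scale and absorb the lower-order term as a perturbation of the homogeneous equation whenever needed. Next, I would reduce the statement to the case in which $z_1,z_2$ satisfy $|z_1|\sim|z_2|\sim r$ for some $r\le R_0$. For both corkscrew points $z_i^{*}=(z_i',\eta|z_i|)$, a bounded chain of balls of radius $\sim \eta r$ stays at height $\ge \eta r/4$, and the interior Harnack inequality applied to $-\Delta_p u = c(x)u^{p-1}$ along this chain yields
\[
u(z_1^{*}) \,\sim\, u(z_2^{*}), \qquad \rho(z_1^{*}) \,\sim\, \rho(z_2^{*}) \,\sim\, r.
\]

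The heart of the argument is then the two-sided bound
\[
\frac{1}{C}\, \rho(z_i)\, \frac{u(z_i^{*})}{|z_i|} \;\le\; u(z_i) \;\le\; C\, \rho(z_i)\, \frac{u(z_i^{*})}{|z_i|}, \qquad i=1,2.
\]
The upper bound comes from the global Lipschitz regularity of $u$ up to the flat boundary (Lieberman's estimate in \cite{Li}, applied after the reflection argument recalled in the introduction) combined with the interior sup bound $\sup_{B_{r/2}(z_i^{*})} u \le C\, u(z_i^{*})$ furnished by interior Harnack. For the lower bound I would invoke a $p$-Hopf type estimate: in a small half-ball tangent to $\partial \mathbb{R}^N_+$ at $(z_i',0)$ I would construct an explicit subsolution $w$ of $-\Delta_p w = c(x) w^{p-1}$, vanishing on the flat face and with $w\le u$ on the curved part, of the form $w(x)= \alpha\, \Phi(|x-x_0|)$ with $\Phi$ a $p$-torsion-type profile on an annular half-region; the weak comparison principle would then give $u\ge w$, and the explicit profile yields $w \sim \rho$.

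The main obstacle is precisely the application of the weak comparison principle used in the lower bound: for $-\Delta_p$ with a general sign-changing lower-order term $c(x)u^{p-1}$ comparison is not automatic. The way around this is to carry out the barrier argument at a scale small enough (chosen depending only on $C_0$) that the rescaled coefficient $r^{p}C_0$ is as small as desired, so that the term $c(x)u^{p-1}$ is a controllable perturbation of the homogeneous equation and a Serrin-type weak comparison applies on that scale. Iterating the one-scale estimate by a standard covering of $B_{R_0}$ finally produces the dependence $C=C(p,N,C_0)$ claimed in the statement.
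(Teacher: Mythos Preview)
The paper does not prove this theorem at all: it is simply quoted from \cite{bidaut} (Theorem~2.8 there) as a known result, with no argument given. So there is no ``paper's own proof'' to compare your proposal against.

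Your sketch is a reasonable outline of the classical boundary-Harnack strategy adapted to the $p$-Laplacian with a bounded lower-order term: scale reduction, interior Harnack chains between corkscrew points, a Carleson-type upper bound from boundary Lipschitz regularity, and a Hopf-type lower bound from a barrier, with the coefficient absorbed at small scales. This is indeed the overall architecture used in \cite{bidaut}, though the actual proof there is considerably more technical (careful treatment of the comparison with the $p$-harmonic measure and of the oscillation decay near the boundary). One point to be cautious about in your outline: your upper bound appeals to the global Lipschitz estimate of \cite{Li}, but in the statement here $u$ is only assumed to solve the equation in the cylinder with zero data on the bottom face, so you would need the local boundary gradient estimate rather than a global one; this is still available from \cite{Li} after odd reflection, but you should invoke it in that local form.
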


\noindent Finally, we state a lemma that will be useful in the proof of Proposition \ref{lem:khjadsjhdfhjdf} below, see \cite[Lemma 2.1]{FMS}.
\begin{lemma}\label{Le:L(R)}
Let $\theta >0$ and $\nu>0$ such that $\theta < 2^{-\nu}$.
Let
$$\mathcal{L}:(1, + \infty) \rightarrow \mathbb{R}$$
be a non-negative and non-decreasing function such that
\begin{equation}\nonumber%\label{eq:L}
\begin{cases}
\mathcal{L}(R)\leq \theta \mathcal{L}(2R) & \forall R>1,\\
\mathcal{L}(R)\leq CR^{\nu} & \forall R >1\,.
\end{cases}
\end{equation}
 Then
$$\mathcal{L}(R)=0.$$

\end{lemma}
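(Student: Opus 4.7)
The plan is a straightforward dyadic iteration of the first hypothesis, combined with the polynomial growth bound to absorb the potential blow-up of $\mathcal{L}(2^nR)$ as $n\to\infty$. The key observation is that the assumption $\theta<2^{-\nu}$ is precisely equivalent to $\theta\,2^{\nu}<1$, so after one step of iteration the "amplification factor" contributed by the growth estimate is beaten by the contraction factor $\theta$.

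First I would fix an arbitrary $R>1$ and iterate the inequality $\mathcal{L}(R)\le\theta\,\mathcal{L}(2R)$. Since $2^{k}R>1$ for every $k\ge0$, the hypothesis applies at each step, and a trivial induction on $n\ge1$ gives
$$\mathcal{L}(R)\,\le\,\theta^{n}\,\mathcal{L}(2^{n}R).$$
Next I would insert the polynomial growth bound $\mathcal{L}(2^{n}R)\le C(2^{n}R)^{\nu}=CR^{\nu}2^{n\nu}$ into the right-hand side to obtain
$$\mathcal{L}(R)\,\le\,CR^{\nu}\,(\theta\,2^{\nu})^{n}.$$
Since $\theta\,2^{\nu}<1$ by hypothesis, $(\theta\,2^{\nu})^{n}\to 0$ as $n\to\infty$, while $CR^{\nu}$ is a fixed constant independent of $n$. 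Letting $n\to\infty$ therefore yields $\mathcal{L}(R)\le 0$, and combined with the non-negativity assumption this forces $\mathcal{L}(R)=0$. Since $R>1$ was arbitrary, the conclusion follows.

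There is essentially no obstacle here: this is the standard dyadic iteration lemma of the hole-filling type used in many elliptic regularity and Liouville arguments. The only points to verify carefully are that the iteration remains inside the domain $(1,+\infty)$ of $\mathcal{L}$, which is automatic since $R>1$ implies $2^{k}R>1$ for all $k\ge0$, and that the factor $CR^{\nu}$ does not depend on the iteration index $n$, which is also clear from the hypotheses. The monotonicity assumption on $\mathcal{L}$ is in fact not needed for this argument, although it would allow a minor simplification if one wanted to phrase the growth bound only on a discrete sequence of radii.
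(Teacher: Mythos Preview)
Your proof is correct and is exactly the standard dyadic iteration argument. The paper does not actually prove this lemma but merely cites \cite[Lemma 2.1]{FMS}; the proof given there is the same iteration you wrote down, so your approach coincides with the intended one.
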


\section{Proof of Theorem \ref{thm:monotoniadellau}}\label{secmain}
At the end of  this Section we will give the proof of Theorem \ref{thm:monotoniadellau}. Now we start showing that any  positive
solution to \eqref{E:P} is increasing in the $y-$direction near the boundary $\partial \mathbb{R}^N_+$. We prove such a result
for problems involving a
 more general class of nonlinearities and for any $1<p<\infty$.
We have the following
\begin{theorem}\label{lem:saliacavallo}
Let $1<p<\infty$ and let $u\in C^{1,\alpha}_{loc} (\overline{\mathbb{R}^N_+})$ be a positive weak  solution to \eqref{E:P} with $|\nabla u|\in L^{\infty}(\mathbb{R}^N_+)$. Assume that
the nonlinearity $f$ is   continuous in $\mathbb{R}^+\cup \{0\}$ and, for some $T>0$, it holds that
\[
|f(t)|\leq \bar k\, t^{p-1}\qquad \text{for}\quad t\in[0,T]
 \]
for some $\bar k=\bar k(T)>0$.
Then it follows that there  exits $\lambda >0$ such that
\begin{equation}\label{eq:garyyy}
\frac{\partial u}{\partial y}(x',y)>0\quad \text{in}\,\,\, \Sigma_\lambda.
\end{equation}
In particular the result holds true under the condition $(h_f)$.
\end{theorem}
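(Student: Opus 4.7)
I will carry out the first step of the Alexandrov--Serrin moving-plane procedure, aiming to show that for sufficiently small $\lambda > 0$ the reflected function $u_\lambda(x',y) := u(x', 2\lambda - y)$ satisfies $u \leq u_\lambda$ throughout $\Sigma_\lambda$. This monotonicity in the reflected variable, applied for every $\lambda' \in (0,\lambda]$, will yield $\partial_y u \geq 0$ in the strip, and strict positivity $\partial_y u > 0$ will then follow from the strong maximum principle.

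The essential smallness input is that $u(x',0) = 0$ combined with $|\nabla u| \leq L := \|\nabla u\|_\infty$ gives $u(x',y) \leq Ly$ throughout $\R^N_+$. I will fix $\lambda > 0$ with $2L\lambda \leq T$ so that $u \leq T$ in $\Sigma_{2\lambda}$ and the hypothesis $|f(u)| \leq \bar k\, u^{p-1}$ applies. With $w := (u - u_\lambda)^+$ in $\Sigma_\lambda$, note that $w$ vanishes on $\{y=0\}$ (since $u \geq 0$) and on $\{y = \lambda\}$ (since $u \equiv u_\lambda$ there), so Poincar\'e in the $y$-variable is available on each vertical slice. For $R > 1$ I will use a cutoff $\eta_R \in C_c^\infty(\R^{N-1})$ with $\eta_R \equiv 1$ on $B'_R$, $\mathrm{supp}\,\eta_R \subset B'_{2R}$, $|\nabla \eta_R| \leq C/R$, and test the difference of the weak formulations for $u$ and $u_\lambda$ against $\varphi = w\, \eta_R^2$. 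For $p > 2$ the standard convexity inequality $(|\xi|^{p-2}\xi - |\zeta|^{p-2}\zeta)\cdot(\xi - \zeta) \geq c_p |\xi - \zeta|^p$ will extract a coercive term $c_p \int |\nabla w|^p \eta_R^2$ (for $1 < p \leq 2$ I would use the weighted $L^2$ variant with weight $(|\nabla u|+|\nabla u_\lambda|)^{p-2}$), leaving only a cross term from $\nabla \eta_R$ and a nonlinearity term.

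The nonlinearity is controlled because on $\{w > 0\}$
\[
|f(u) - f(u_\lambda)| \leq 2\bar k\, u^{p-1} \leq 2\bar k\, (L\lambda)^{p-1},
\]
which furnishes the decisive $\lambda^{p-1}$ small factor; combined with H\"older and the Poincar\'e inequality in $y$, the nonlinearity integral is bounded, after absorbing $\frac{c_p}{4}\int|\nabla w|^p\eta_R^2$ to the left, by a constant times $\lambda^\sigma R^{N-1}$ for some $\sigma=\sigma(p)>0$. The cross term, via $|\nabla u|, |\nabla u_\lambda| \leq L$ and Young's inequality, will contribute a contraction-type piece involving $\mathcal L(2R) - \mathcal L(R)$, where $\mathcal L(R) := \int_{\mathcal C_{(0,\lambda)}(R)}|\nabla w|^p$. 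All together this will produce an iteration inequality for $\mathcal L(R)$ whose contraction coefficient depends on $\lambda$ and tends to zero as $\lambda \to 0$; coupled with the a priori polynomial bound $\mathcal L(R) \leq CR^{N-1}$ (immediate from $|\nabla w| \leq 2L$), the choice of $\lambda$ small enough to beat the exponent $\nu = N-1$ will allow me to invoke Lemma~\ref{Le:L(R)} and conclude $\mathcal L(R) \equiv 0$, hence $w \equiv 0$, i.e.\ $u \leq u_\lambda$ in $\Sigma_\lambda$.

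For the strict monotonicity $\partial_y u > 0$, I will observe that $\partial_y u$ weakly solves the linearized equation $L_u(\partial_y u,\cdot) \equiv 0$ and is non-negative in $\Sigma_\lambda$; the strong maximum principle for this operator (Theorem~\ref{hthPMFderrr} in the Lipschitz setting, with the analogous extensions from \cite{DS2}) enforces either $\partial_y u \equiv 0$ or $\partial_y u > 0$, and the former is ruled out by the strict ordering $u < u_\lambda$ in $\Sigma_\lambda$ obtained by applying the strong maximum principle to $u_\lambda - u$. The principal obstacle is the unbounded horizontal extent of $\Sigma_\lambda$, which forces the $x'$-cutoffs and the contraction iteration: the delicate point is calibrating the $\lambda$-smallness so that the contraction coefficient strictly beats $2^{-(N-1)}$ while the polynomial bound on $\mathcal L$ remains of order $R^{N-1}$. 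The $\lambda^{p-1}$ factor generated by the pointwise bound $u \leq L\lambda$ on the thin strip is precisely what makes this balance achievable.
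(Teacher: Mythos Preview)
Your approach is genuinely different from the paper's, and it has a real gap.

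\textbf{Where the argument breaks.} You want to feed an inequality of the form $\mathcal L(R)\le\theta(\lambda)\,\mathcal L(2R)$ into Lemma~\ref{Le:L(R)}. But your estimates do not produce such a pure multiplicative contraction. The pointwise bound $|f(u)-f(u_\lambda)|\le 2\bar k(L\lambda)^{p-1}$ is an $L^\infty$ bound on the difference, \emph{not} a Lipschitz bound of the type $C\,|u-u_\lambda|$; testing it against $w\eta_R^2$ yields (after H\"older and Poincar\'e in $y$) a term of the form $C(\lambda)\,R^{(N-1)(p-1)/p}\,\mathcal L(2R)^{1/p}$, or after Young an \emph{additive} remainder $C(\lambda)\,R^{N-1}$. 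The cross term behaves the same way: either you keep $\mathcal L(2R)^{1/p}$ with a growing power of $R$, or you use $w\le L\lambda$, $|\nabla w|\le 2L$ and obtain another additive $R^{N-1}$-type term. In either case Lemma~\ref{Le:L(R)} is inapplicable: iterating $\mathcal L(R)\le\theta\,\mathcal L(2R)+B\,R^{N-1}$ with $\theta\,2^{N-1}<1$ only gives $\mathcal L(R)\le C\,R^{N-1}$, which you already know. The claim that ``the contraction coefficient depends on $\lambda$ and tends to zero'' is not supported: the $\lambda$-smallness you generate sits in the additive terms, not in a multiplicative factor in front of $\mathcal L(2R)$.

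The device that \emph{does} give a pure contraction in this paper is the weighted Poincar\'e inequality of Theorem~\ref{thm:Poincar} with weight $\rho=|\nabla u_\lambda|^{p-2}$ (cf.\ Proposition~\ref{lem:khjadsjhdfhjdf}); but for $p>2$ that requires, via Proposition~\ref{pro:SobConstant}, a lower bound $u\ge\gamma>0$ on the relevant set (hypothesis~\eqref{ipag}). In the thin strip $\Sigma_\lambda$ near $\{y=0\}$ this lower bound is unavailable a priori, so the weighted route is closed too. This is precisely why the first step requires a different mechanism.

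\textbf{A second issue.} Your upgrade from $\partial_y u\ge 0$ to $\partial_y u>0$ invokes Theorem~\ref{hthPMFderrr}, which assumes $f$ locally Lipschitz and positive, and also $p>\frac{2N+2}{N+2}$. Theorem~\ref{lem:saliacavallo} assumes only continuity of $f$ with the growth bound $|f(t)|\le\bar k\,t^{p-1}$, and covers all $1<p<\infty$; so that step is not justified under the stated hypotheses.

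\textbf{What the paper does instead.} The paper abandons the moving-plane test-function route for this step. It argues by contradiction: take a sequence $P_n=(x_n',y_n)$ with $\partial_y u(P_n)\le 0$ and $y_n\to 0$, rescale $w_n(x',y)=u(x'+x_n',y)/u(x_n',1)$, and observe that $-\Delta_p w_n=c_n(x)\,w_n^{p-1}$ with $c_n$ uniformly bounded thanks to $|f(t)|\le\bar k\,t^{p-1}$. Uniform $L^\infty$ bounds on $w_n$ up to the boundary come from the interior Harnack inequality together with the boundary Harnack inequality of Theorem~\ref{thm:BoundaryHarnack}; $C^{1,\alpha}$ estimates then give compactness and a limit $w_0\ge 0$ with $w_0(0,1)=1$ solving $-\Delta_p w_0=c_0\,w_0^{p-1}$ and vanishing on $\{y=0\}$. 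The strong maximum principle and Hopf's lemma (Theorem~\ref{semihop}) force $\partial_y w_0(0,0)>0$, contradicting $\partial_y w_n(0,y_n)\le 0$ and $y_n\to 0$. This compactness/blow-up argument uses only continuity of $f$ with the stated growth, delivers strict positivity of $\partial_y u$ directly, and bypasses the unbounded-domain energy iteration entirely.
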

\begin{proof}
We argue
by contradiction and  we assume that there exists a sequence of points $P_n=(x_n',y_n)$ such that
\begin{equation}\label{eq:salisulcavallo}
\frac{\partial u}{\partial y}(x_n',y_n)\leq0\quad \text{and}\quad y_n\underset{n\rightarrow +\infty}{\longrightarrow} 0.
\end{equation}
We consider the sequence $\hat x_n$ defined by $$\hat x_n=(x_n',1)\,.$$
\noindent We   set $$\alpha_n:=u(x'_n, 1)$$ and
\begin{equation}\label{eq:ashgghagha}
w_n(x',y)=\frac{u(x'+x_n', y)}{\alpha_n}.
\end{equation}
We remark that $w_n(0,1)=1$ and we have
\begin{eqnarray}\label{sdjkfgsfgk}
-\Delta_pw_n(x)&=&\frac{1}{\alpha_n^{p-1}}f\big(u(x'+x_n', y)\big)=\frac{1}{\alpha_n^{p-1}}\frac{f\big(u(x'+x_n', y)\big)}{u^{p-1}(x'+x_n', y)}u^{p-1}(x'+x_n', y)\\\nonumber
&=&c_n(x)w_n^{p-1}(x),
\end{eqnarray}
for
\begin{equation}\label{eq:cccccc(x)}
c_n(x)=\frac{f\big(u(x'+x_n', y)\big)}{u^{p-1}(x'+x_n', y)}\,.
\end{equation}
Since for any $L>0$ we have that $u\in L^{\infty}(\Sigma_{(L)})$ (by the Dirichlet condition and because $|\nabla u|$ is bounded in $\mathbb{R}^N_+$),  by the assumption on the nonlinearity $f$, we obtain that
\begin{equation}\label{eq:lacarrozzellaxxx}
\|c_n(x)\| _{L^{\infty}(\Sigma_{L})}\leq
\|c_n(x)\| _{L^{\infty}(\Sigma_{2L})}\leq C_0(L).
\end{equation}
Now we consider real numbers $L,R$ and $R_0$ satisfying
%For $L>1$ we fix $R>0$. \\
\begin{equation}\label{LReR0}
0 < 2R_0 < 1 < R < L
\end{equation}

\noindent \emph{We claim that:}
\begin{equation}\nonumber
\|w_n\|_{L^{\infty}(\mathcal C_{(0,L)}(R))}\leq C(L,R,R_0)\,.
\end{equation}
%To prove this, in view of Theorem \ref{thm:BoundaryHarnack}, we fix $R_0>0$. Here below we %will make some geometric considerations
%for which it is convenient to assume that
%\[
%2R_0<R<L\,.
%\]
 Since $w_n(0,1)=1$, by the classical Harnack inequality, see \cite{PSB}[Theorem 7.2.1], we have that
\begin{equation}\label{eq:classicHarnack}
\|w_n\|_{L^{\infty}(\mathcal C_{(0,L)}(R)\cap \{y\geq \frac{ R_0}{4}\})}\leq C^{i}_H(L,R, R_0)\,.
\end{equation}
Now we exploit Theorem \ref{thm:BoundaryHarnack} to deduce that
 \begin{equation}\label{eq:boundaryHarnack}
\|w_n\|_{L^{\infty}(\mathcal C_{(0,L)}(R)\cap \{y\leq \frac{ R_0}{4}\})}\leq C^{b}_H(L,R,R_0).
\end{equation}
To this end, let $\tilde P=(\tilde x',\tilde y)$ be such that $\tilde x'\in B'_R(0)$ and $0<\tilde y< \frac{ R_0}{4}$ and consider a corresponding point
$$ \check{Q}\,:=\,(\check{x}',0)$$
in  such a way  that
$$
\check{x}'\in B'(0,R)\qquad\text{and}\qquad
\tilde P \in \partial B_{ R_0}(\check{Q})\,.
$$
Recalling the choice $2R_0<R<L$,
it is easy to check  that such a point exists (and in general is not unique), see Figure \ref{figuuuuu} below.
\begin{figure}[h]
\begin{center}
\includegraphics[height=.55\textwidth,width=0.70\textwidth]{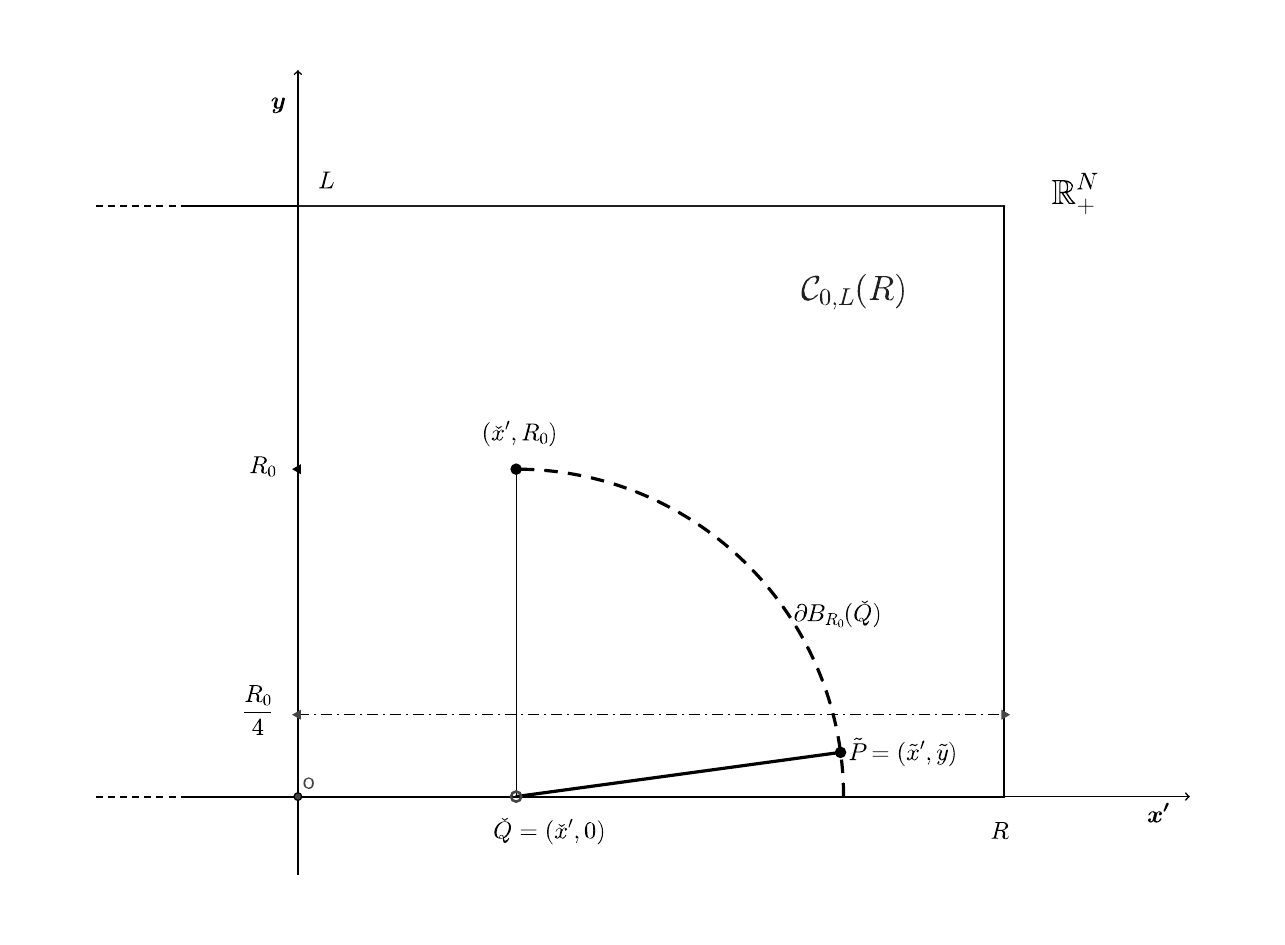}
\caption{\label{figuuuuu}}
\end{center}
\end{figure}
By \cite{bidaut} (see Theorem \ref{thm:BoundaryHarnack}) and recalling \eqref{eq:lacarrozzellaxxx}, we infer  that
\begin{equation}\nonumber
\frac{w_n(\tilde P)}{\tilde y}\leq C\,\frac{w_n(\check{x}',  R_0)}{ R_0},
\end{equation}
and, recalling   also that $w_n(x,0)=0$, we deduce that
$$\|w_n\|_{L^{\infty}(\mathcal C_{(0,L)}(R)\cap \{y\leq \frac{ R_0}{4}\}))}\leq \frac{C }{4}\cdot C^{i}_H(L,R,R_0),$$
that is \eqref{eq:boundaryHarnack} holds, with $C^{b}_H(L,R,R_0)=C \cdot C^{i}_H(L,R,R_0)$.
Finally using \eqref{eq:classicHarnack} and \eqref{eq:boundaryHarnack} it follows that
$$\|w_n\|_{L^{\infty}(\mathcal C_{(0,L)}(R))}\leq C(L,R,R_0).$$
Now  consider $u$, (and consequently $u(x'+x_n', y)$ in \eqref{eq:ashgghagha})  defined on the entire space $\mathbb{R}^N$ by odd reflection. That is
 $$u(x',y)=-u(x',-y) \quad\text{ in } \quad \{y<0\},$$
and consequently
$$f(t)=-f(-t)\quad\text{if } \quad \{t<0\}.$$
In this case we will refer to the cylinder
$$\mathcal C_{(-L,L)}(R):= B'_R(0)\times (-L,L).$$
By standard regularity theory, see e.g Theorem 1 in \cite{T}, since  $\|w_n\|_{L^{\infty}(\mathcal C_{(-L,L)}(R))}\leq C(L,R,R_0)$, we have that
 $$\|w_n\|_{C^{1,\alpha}_{loc}(\mathcal C_{(-L,L)}(R))}\leqslant C(L,R,R_0)$$
for some $0<\alpha<1$.
This allows to use Ascoli-Arzel\`{a} theorem and get that
\begin{equation}\nonumber%\label{roccaseccatrubis}
w_n\overset{C^{1,\alpha'}_{loc}(\mathcal C_{(-L,L)}(R))}{\longrightarrow}w_{0}
\end{equation}
up to subsequences, for $\alpha '<\alpha$. Furthermore, thanks to \eqref{eq:lacarrozzellaxxx}, we infer that
\begin{equation}\label{weklystar}
c_n(\cdot)\rightarrow\,c_0(\cdot)
\end{equation}
weakly star in $L^{\infty}(\mathcal C_{(-L,L)}(R))$  up to subsequences. This and the fact that $w_0\in C^{1,\alpha'}(\mathcal C_{(-L,L)}(R))$ allows to deduce easily that
\begin{equation}\nonumber
\begin{cases}
-\Delta_p w_0= c_0(x)\,w_0^{p-1} & \text{ in }\mathcal C_{(0,L)}(R)\\
w_0(x',y) \geqslant 0 & \text{ in } \mathcal C_{(0,L)}(R)\\
w_0(x',0)=0&  \text{ on }\partial\mathcal C_{(0,L)}(R)\cap \partial  \mathbb{R}^N_+.
\end{cases}
\end{equation}
By the strong maximum principle, and recalling that
  $w_n(0,1)=1$ for all $n\in \mathbb{N}$,  we deduce that $w_0> 0$ in $\mathcal C_{(0,L)}(R)$ and,  by Hopf's Lemma,  we infer that
$$ \frac{\partial w_0}{\partial y}(0,0)> 0\,.$$
We conclude the proof noticing that
 a contradiction occurs since by
\eqref{eq:salisulcavallo} we should have that
$ \frac{\partial w_0}{\partial y}(0,0)\leq 0$.
\end{proof}
\begin{corollary}\label{cor:monotonia}
Under the hypotheses of Theorem \ref{lem:saliacavallo},  there exits $\lambda>0$ such that, for all $0<\displaystyle \theta\leq \frac{\lambda}{2}$, it holds  that
\begin{equation}\nonumber
u\leq u_{\theta}  \quad \text{in}\,\, \Sigma_\theta.
\end{equation}
\end{corollary}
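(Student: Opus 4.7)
The plan is to deduce Corollary \ref{cor:monotonia} as an essentially immediate consequence of Theorem \ref{lem:saliacavallo} by integrating $\partial u/\partial y$ along vertical segments contained in the strip where monotonicity has already been established.

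First I would apply Theorem \ref{lem:saliacavallo} to obtain a $\lambda>0$ such that $\partial u/\partial y(x',y)>0$ for every $(x',y)\in \Sigma_\lambda=\mathbb{R}^{N-1}\times(0,\lambda)$. I would then fix $\theta$ with $0<\theta\leq \lambda/2$ and an arbitrary point $(x',y)\in\Sigma_\theta$, so that $0<y<\theta$. The reflected point is $(x',2\theta-y)$ with $\theta<2\theta-y<2\theta\leq\lambda$, hence both endpoints of the vertical segment $\{(x',t):y\leq t\leq 2\theta-y\}$ lie in the closure of $\Sigma_\lambda$, and the whole open segment lies inside $\Sigma_\lambda$.

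The key step is then the identity
\begin{equation}
u_\theta(x',y)-u(x',y)=u(x',2\theta-y)-u(x',y)=\int_{y}^{2\theta-y}\frac{\partial u}{\partial y}(x',t)\,dt,
\end{equation}
which is well defined since $u\in C^{1,\alpha}_{loc}(\overline{\mathbb{R}^N_+})$. Because $y<\theta<2\theta-y$ and the integrand is strictly positive on the entire interval $(y,2\theta-y)\subset(0,\lambda)$ by Theorem \ref{lem:saliacavallo}, the right-hand side is nonnegative (in fact strictly positive for $y<\theta$), yielding $u(x',y)\leq u_\theta(x',y)$ in $\Sigma_\theta$.

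There is no real obstacle here, as everything reduces to verifying the inclusions $(0,2\theta)\subset(0,\lambda)$ and $y<2\theta-y$; the delicate analytic work has already been absorbed into Theorem \ref{lem:saliacavallo}. The only minor point to double-check is that the vertical segment indeed stays inside $\Sigma_\lambda$ where the sign information on $\partial u/\partial y$ applies, which is guaranteed precisely by the constraint $\theta\leq \lambda/2$ chosen in the statement.
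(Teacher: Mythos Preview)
Your proof is correct and follows exactly the approach of the paper, which simply says that the result follows from the monotonicity in \eqref{eq:garyyy} together with the definition of $u_\theta$. You have merely made explicit the elementary integration along vertical segments that the paper leaves implicit.
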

\begin{proof}
Given $\lambda$ from Theorem \ref{lem:saliacavallo},  using  \eqref{eq:garyyy}, it is sufficient to recall the definition of $u_\theta$ in~\eqref{eq:reflected}.
\end{proof}
Now we prove a technical result, we may refer to it as a weak comparison principle in narrow domains, that we are going to use in  the sequel to prove our main result. We define the projection  $\mathcal P$ as
\begin{eqnarray}\nonumber
\mathcal P:\quad  \mathbb{R}^N&\longrightarrow& \mathbb{R}^{N-1}\\\nonumber(x',y)&\longrightarrow& x'\,.
\end{eqnarray}

\noindent In the proof of the next proposition,  we will use the following  inequalities: \\

\noindent $\forall \eta, \eta' \in  \mathbb{R}^{N}$ with $|\eta|+|\eta'|>0$ there exists positive constants $\dot C, \check C$ depending on $p$ such that
\begin{eqnarray}\label{eq:inequalities}
[|\eta|^{p-2}\eta-|\eta'|^{p-2}\eta'][\eta- \eta'] \geq \dot C (|\eta|+|\eta'|)^{p-2}|\eta-\eta'|^2, \\ \nonumber\\\nonumber
||\eta|^{p-2}\eta-|\eta'|^{p-2}\eta '|\leq \check C (|\eta|+|\eta'|)^{p-2}|\eta-\eta '|.
\end{eqnarray}

\begin{proposition}\label{lem:khjadsjhdfhjdf}
Let $p>2$ and let $u\in C^{1,\alpha}_{loc} (\overline{\mathbb{R}^N_+})$ be a positive weak  solution to \eqref{E:P} with $|\nabla u|\in L^{\infty}(\mathbb{R}^N_+)$.  For
  $0\leq\alpha<\beta\leq \lambda$, let  $\Sigma_{(\alpha,\beta)}$ be the strip defined in \eqref{eq:strippppppp} and
assume  that
\begin{equation}\label{eq:stomp}
u\leq u_\lambda \qquad\text{on}\quad \partial \Sigma_{(\alpha,\beta)}\,.
\end{equation}
Assume furthermore that, setting
\begin{equation}\nonumber
\mathcal I_{(\lambda)}^+=\Big\{(x',\lambda)\,:\, x'\in \mathcal P\big(Supp\,(u-u_{\lambda})^+\big) \Big\},
\end{equation}
it holds that
\begin{equation}\label{ipag}
u(x)\geq \gamma>0 \quad\text{on}\quad \mathcal I_{(\lambda)}^+.
\end{equation}
Then,
for $\Lambda>0$ fixed  such that
\[
\Lambda\geq 2\lambda+1\,,
\]
it follows that  there exists $h_0=h_0(f, p, \gamma, N, \|\nabla u\|_{L^\infty(\Sigma_\Lambda)})$ such that if $\beta -\alpha \leq h_0$ then we have
$$ u\leq u_\lambda\quad \text{in}\quad \Sigma_{(\alpha,\beta)}.$$
\end{proposition}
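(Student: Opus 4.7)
The plan is to implement the classical narrow-strip weak comparison principle for the $p$-Laplace operator, adapted to the unbounded setting by a cut-off in the $x'$-directions and an iteration based on Lemma \ref{Le:L(R)}. Fix $R>1$ and a standard cut-off $\varphi_R\in C^\infty_c(\mathbb{R}^{N-1})$ with $\varphi_R\equiv 1$ on $B'(0,R)$, supported in $B'(0,2R)$, $|\nabla\varphi_R|\le 2/R$, extended as $\varphi_R(x',y):=\varphi_R(x')$. Because of \eqref{eq:stomp}, $(u-u_\lambda)^+$ vanishes on the horizontal faces $\{y=\alpha\}\cup\{y=\beta\}$, so $\psi := (u-u_\lambda)^+\varphi_R^2$ is admissible as a test function in the weak formulation. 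Subtracting the equations satisfied by $u$ and $u_\lambda$, testing with $\psi$, and using the algebraic inequalities \eqref{eq:inequalities} together with Young's inequality on the cross term produced by $\nabla\varphi_R$, I expect to obtain
\[
\int \rho\,|\nabla(u-u_\lambda)^+|^2\,\varphi_R^2 \;\le\; C_{\mathrm{Lip}}\!\int [(u-u_\lambda)^+]^2 \varphi_R^2 \;+\; \frac{C}{R^2}\!\int_{\{R\le |x'|\le 2R\}} [(u-u_\lambda)^+]^2,
\]
where $\rho := (|\nabla u|+|\nabla u_\lambda|)^{p-2}$ and $C_{\mathrm{Lip}}$ is the local Lipschitz constant of $f$ on $[0,\|u\|_{L^\infty(\Sigma_\Lambda)}]$, finite because $|\nabla u|\in L^\infty(\Sigma_\Lambda)$ and $u=0$ on $\partial\mathbb{R}^N_+$.

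Next I apply the weighted Poincar\'e inequality of Theorem \ref{thm:Poincar} to $w:=(u-u_\lambda)^+\varphi_R$ on the bounded domain $\Sigma_{(\alpha,\beta)}\cap \mathcal{C}_{(\alpha,\beta)}(2R)$, verifying the weight condition \eqref{eq:weight} through Proposition \ref{pro:SobConstant}. This is the step in which \eqref{ipag} is essential: the pointwise lower bound $u\ge\gamma$ on the lifted projection $\mathcal I^+_{(\lambda)}$ together with $\|\nabla u\|_{L^\infty(\Sigma_\Lambda)}\le M$ (for which the choice $\Lambda\ge 2\lambda+1$ is made, so that the reflection $y\mapsto 2\lambda-y$ maps $\Sigma_{(\alpha,\beta)}$ into $\Sigma_\Lambda$) should yield, via the gradient estimate, a uniform lower bound on $f(u)$ and $f(u_\lambda)$ in a fixed-size neighborhood of the support of $w$, depending only on $\gamma$ and $M$. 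This produces a weighted Poincar\'e constant $C_p=C_p(h_0)$ with $C_p(h_0)\to 0$ as $h_0\to 0$. Choosing $h_0$ so small that $C_p(h_0)\,C_{\mathrm{Lip}}\le 1/2$, absorbing the term $\int[(u-u_\lambda)^+]^2\varphi_R^2$ into the left hand side leads to
\[
\int_{\mathcal{C}_{(\alpha,\beta)}(R)} [(u-u_\lambda)^+]^2 \;\le\; \frac{C_*}{R^2}\int_{\mathcal{C}_{(\alpha,\beta)}(2R)} [(u-u_\lambda)^+]^2,
\]
with $C_*$ independent of $R$.

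Setting $\mathcal L(R):=\int_{\mathcal{C}_{(\alpha,\beta)}(R)} [(u-u_\lambda)^+]^2$, the function $\mathcal L$ is non-decreasing and satisfies $\mathcal L(R)\le C_\infty R^{N-1}$ thanks to $u\in L^\infty(\Sigma_\Lambda)$ and $\beta-\alpha\le h_0$. Taking $R$ large enough that $C_*/R^2 < 2^{-(N-1)}$, Lemma \ref{Le:L(R)} applies and forces $\mathcal L\equiv 0$, which is precisely the claimed inequality $u\le u_\lambda$ in $\Sigma_{(\alpha,\beta)}$. The main obstacle I expect to have to handle carefully is the verification of the weighted Poincar\'e hypothesis with constants depending only on $\gamma, p, N$ and $\|\nabla u\|_{L^\infty(\Sigma_\Lambda)}$, and not on the location or shape of the support of $(u-u_\lambda)^+$: the pointwise information \eqref{ipag} at the single level $y=\lambda$ has to be propagated, through the reflection identity and the gradient bound, into a uniform lower bound on $f(u)$ on a neighborhood of the support, so that Proposition \ref{pro:SobConstant} yields a uniform estimate on $C^*$ and the narrow-strip smallness in $h_0$ can do its job.
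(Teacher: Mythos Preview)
Your overall strategy---testing with $(u-u_\lambda)^+\varphi_R^2$, using \eqref{eq:inequalities}, then applying the weighted Poincar\'e inequality to close the estimate, and finishing with Lemma~\ref{Le:L(R)}---is exactly the paper's. However, there is a genuine gap in how you invoke the weighted Poincar\'e inequality. Theorem~\ref{thm:Poincar} produces a constant $C_p=C_p(d,C^*)$ depending on the \emph{diameter} $d$ of the domain on which it is applied; it goes to zero only as $d\to 0$. If you apply it on $\mathcal{C}_{(\alpha,\beta)}(2R)$, whose diameter is of order $R$, you obtain a constant that does not shrink with $h_0=\beta-\alpha$, so the absorption step ``$C_p(h_0)\,C_{\mathrm{Lip}}\le 1/2$'' cannot be achieved this way. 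For the same reason your iteration $\mathcal{L}(R)\le (C_*/R^2)\,\mathcal{L}(2R)$ is not justified: the extra Poincar\'e step needed to pass from the weighted gradient back to $\int_{\mathcal{C}(R)}[(u-u_\lambda)^+]^2$ would again bring in a constant growing with $R$, wiping out the $1/R^2$ gain.

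The paper's remedy is precisely the missing idea: cover $\mathcal{C}(2R)$ by disjoint hypercubes $Q_i=Q_i'\times[\alpha,\beta]$ of edge $\beta-\alpha$ (hence diameter $\sqrt{N}(\beta-\alpha)$) and apply Theorem~\ref{thm:Poincar} on each $Q_i$ separately. Since $(u-u_\lambda)^+$ does not vanish on the lateral faces of $Q_i$, the representation \eqref{eq:hip_real} is obtained by an odd reflection across $\{y=\beta\}$ so that the extended function has zero mean on $Q_i\cup Q_i^r$. The weight condition \eqref{eq:weight} is checked with $\rho=|\nabla u_\lambda|^{p-2}$ by reflecting $Q_i$ across $\{y=\lambda\}$ to $Q_i^{\mathcal{R}_\lambda}$, which sits at distance $\ge\lambda$ from $\partial\mathbb{R}^N_+$, and then invoking Proposition~\ref{pro:SobConstant}; here the hypothesis \eqref{ipag}, propagated to a neighborhood via the classical Harnack inequality, supplies the uniform lower bound $\beta_1$ on $f(u)$ with constants depending only on $\gamma$ and $\|\nabla u\|_{L^\infty(\Sigma_\Lambda)}$. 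Summing over the cubes yields a single constant $C_P^*=\max_i C_p(Q_i)$ that tends to zero with $h_0$, and the iteration is run with $\mathcal{L}(R)=\int_{\mathcal{C}(R)}(|\nabla u|+|\nabla u_\lambda|)^{p-2}|\nabla(u-u_\lambda)^+|^2$ and a \emph{fixed} $\theta<2^{-N}$ coming from $h_0$ small, not from $R$ large. Without this covering-by-small-cubes step your argument does not close.
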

\begin{proof}
Recalling that $u_{\lambda}(x',y)=u(x',2\lambda -y)$,  we remark that  $(u-u_\lambda)^+ \in L^{\infty}(\Sigma_{(\alpha,\beta)})$ since we assumed
$|\nabla u|$ is bounded. Let us now define
\begin{equation}\nonumber%\label{Eq:Cut-off}
\Psi=(u-u_\lambda)^+\varphi_R^2,\end{equation} where $\varphi_R(x',y)=\varphi_R(x') \in C^{\infty}_c (\mathbb{R}^{N-1}) $, $\varphi_R \geq 0$ such that
\begin{equation}\label{Eq:Cut-off1}
\begin{cases}
\varphi_R \equiv 1, & \text{ in } B^{'}(0,R) \subset \mathbb{R}^{N-1},\\
\varphi_R \equiv 0, & \text{ in } \mathbb{R}^{N-1} \setminus B^{'}(0,2R),\\
|\nabla \varphi_R | \leq \frac CR, & \text{ in } B^{'}(0, 2R) \setminus B^{'}(0,R) \subset  \mathbb{R}^{N-1},
\end{cases}
\end{equation}
where $B^{'}(0,R)$ denotes the ball in $\mathbb{R}^{N-1}$ with center $0$ and radius $R>0$. From now on, for the sake of simplicity, we set $\varphi_R(x',y):=\varphi(x',y)$.
By \eqref{Eq:Cut-off1} and  by the fact that  $u\leq u_\lambda  \text{ on } \partial \Sigma_{(\lambda, \beta)}$ (see \eqref{eq:stomp}),   it follows that
$$\Psi \in W_0^{1,p}(\mathcal C_{(\alpha,\beta)}(2R)).$$
Since $u$ is a solution to problem \eqref{E:P}, then it follows that $u,u_\lambda$ are solutions to
\begin{equation}\label{Eq:WCP}
\begin{cases}
-\Delta_p u= f(u) & \text{ in }\Sigma_{(\alpha,\beta)},\\
- \Delta_p u_\lambda=f(u_\lambda) & \text{ in }\Sigma_{(\alpha,\beta)},\\
\qquad u\leq u_\lambda & \text{ on } \partial \Sigma_{(\alpha,\beta)}.
\end{cases}
\end{equation}
Then using $\Psi$ as test function in both equations of problem \eqref{Eq:WCP} and substracting we get
\begin{eqnarray}\label{eq:cn1}\\\nonumber
&&\int_{\mathcal{C}(2R)}\big(|\nabla u|^{p-2}\nabla u-|\nabla u_\lambda|^{p-2}\nabla u_\lambda,\nabla(u-u_\lambda)^+\big)\varphi^2\\\nonumber&+&\int_{\mathcal{C}(2R)}\big(|\nabla u|^{p-2}\nabla u-|\nabla u_\lambda|^{p-2}\nabla u_\lambda,\nabla \varphi^2\big)(u-u_\lambda)^+\\\nonumber
&= &\int_{\mathcal{C}(2R)}\big(f(u)-f(u_\lambda)\big)(u-u_\lambda)^+\varphi^2,
\end{eqnarray}
where $\mathcal{C}(\cdot)$ denotes the cylinder defined in  \eqref{eq:cylllllllinder}.
 By \eqref{eq:inequalities} and the fact that $p \geq 2$, from \eqref{eq:cn1} we deduce that
\begin{eqnarray}\label{eq:cn2}
&&\dot  C\int_{\mathcal{C}(2R)}(|\nabla u|+|\nabla u_\lambda|)^{p-2}|\nabla(u-u_\lambda)^+|^2\varphi^2 \\\nonumber &\leq&\int_{\mathcal{C}(2R)}\big(|\nabla u|^{p-2}\nabla u-|\nabla u_\lambda|^{p-2}\nabla u_\lambda,\nabla(u-u_\lambda)^+\big)\varphi^2\\\nonumber
&=& - \int_{\mathcal{C}(2R)}\big(|\nabla u|^{p-2}\nabla u-|\nabla u_\lambda|^{p-2}\nabla u_\lambda,\nabla \varphi^2\big)(u-u_\lambda)^+\\\nonumber &+&\int_{\mathcal{C}(2R)}\big(f(u)-f(u_\lambda)\big)(u-u_\lambda)^+\varphi^2\\\nonumber &\leq&
\int_{\mathcal{C}(2R)}\left |\big(|\nabla u|^{p-2}\nabla u-|\nabla u_\lambda|^{p-2}\nabla u_\lambda,\nabla \varphi^2\big)\right|(u-u_\lambda)^+ \\\nonumber &+&\int_{\mathcal{C}(2R)}\big(f(u)-f(u_\lambda)\big)(u-u_\lambda)^+\varphi^2\\\nonumber
&\leq&\check C\int_{\mathcal{C}(2R)} (|\nabla u|+|\nabla u_\lambda|)^{p-2}|\nabla(u-u_\lambda)^+| |\nabla \varphi^2| (u-u_\lambda)^+ \\\nonumber &+&\int_{\mathcal{C}(2R)}\big(f(u)-f(u_\lambda)\big)(u-u_\lambda)^+\varphi^2,\end{eqnarray}
where in the last line we used Schwarz inequality and the second of \eqref{eq:inequalities}. Setting
\begin{equation}\label{eq:I1}{ I_1:=\check C\int_{\mathcal{C}(2R)} (|\nabla u|+|\nabla u_\lambda|)^{p-2}|\nabla(u-u_\lambda)^+| |\nabla \varphi^2| (u-u_\lambda)^+}
\end{equation}
and
\begin{equation}\label{eq:I2'}
I_2:=\int_{\mathcal{C}(2R)}\big(f(u)-f(u_\lambda)\big)(u-u_\lambda)^+\varphi^2,
\end{equation}
 \eqref{eq:cn2} becomes
\begin{equation}\label{eq:cn33} \dot  C\int_{\mathcal{C}(2R)}(|\nabla u|+|\nabla u_\lambda|)^{p-2}|\nabla(u-u_\lambda)^+|^2\varphi^2\leq I_1+I_2.
\end{equation}
In order to estimate the terms $I_1$ and $I_2$ in \eqref{eq:cn33}
we will exploited the weighted Poincar\'e type inequality \eqref{eq:poincareineq} (see \cite{DS1}) and a covering argument that goes back to \cite{FMS3}.
Let us consider  the  hypercubes  $Q_i$  of $\mathbb{R}^N$ defined by
$$Q_i=Q_i'\times [\alpha,\beta],$$
where $Q_i'\subset \mathbb{R}^{N-1}$ are hypercubes of $\mathbb{R}^{N-1}$, with edge $\beta-\alpha$ and   such that
$$\bigcup_i Q_i'=\mathbb{R}^{N-1}.$$
Moreover we assume that   $Q_i\cap Q_j=\emptyset$ for $i\neq j$ and
\begin{equation}\label{eq:Qunion}
\bigcup_{i=1}^{\overline{N}}\overline{Q_i}\supset \mathcal{C}(2R).
\end{equation}
It follows as well,  that each set  $Q_i$ has diameter
\begin{equation}\label{eq:diameterQ}
\text{diam}(Q_i)=d_Q=\sqrt{N}(\beta-\alpha), \qquad  i=1,\cdots,\overline{N}.
\end{equation}
 The  covering  in \eqref{eq:Qunion} will allow us  to use in each  $Q_i$ the weighted Poincar\'e type inequality  and to  take advantage of the constant  $C_p$ in   Theorem \ref{thm:Poincar},  that turns to be not depending on the index $i$ of \eqref{eq:Qunion}. Later we will recollect the estimates.\\

   \noindent Let us define
\begin{equation}\label{eq:w}
w(x):=
\begin{cases}
\Big(u-u_\lambda\Big)^+(x',y) & \text{if } (x',y)\in \overline{Q}_i; \\
-\Big(u-u_\lambda\Big)^+(x',2\beta-y) & \text{if } (x',y)\in  \overline{Q}_i^r,
\end{cases}
\end{equation}
where $(x',y)\in \overline{Q}_i^r$ iff $(x',2\beta-y)\in \overline{Q}_i$.
We claim that
\begin{equation}\label{eq:wkfjbnkdfhsdkb}
\int_{Q_i}\,w^2\,\leq\,C_p(Q_i)\,\int_{Q_i} (|\nabla u|+|\nabla u_\lambda|)^{p-2} |\nabla w|^2
\end{equation}
where $C_p(Q_i)$ is given by Theorem \ref{thm:Poincar} and has the property that it goes to zero if the diameter of $Q_i$
goes to zero. Actually, since $p\geq2$, we will deduce  \eqref{eq:wkfjbnkdfhsdkb} by
\begin{equation}\label{eq:wkfjbnkdfhsbmgkdkb}
\int_{Q_i}\,w^2\,\leq\,C_p(Q_i)\,\int_{Q_i} |\nabla u_\lambda|^{p-2} |\nabla w|^2\,.
\end{equation}
The fact that Theorem \ref{thm:Poincar} can be applied to deduce \eqref{eq:wkfjbnkdfhsbmgkdkb} is somehow technical and we describe the procedure here below.\\

\noindent We have ${\displaystyle{\int_{Q_i \cup Q_i^r}w(x)dx=0}}$ and therefore, see \cite[Lemma 7.14, Lemma 7.16]{GT}, it follows that
$$w(x)=\hat C\int_{{Q_i \cup Q_i^r}}\frac{(x_i-z_i)D_iw(z)}{|x-z|^N}dz\quad  \,\,\text{a.e. } x\in {Q_i \cup Q_i^r},$$
where $\hat C= \hat C(d_Q,N)$, is a positive constant. Then for almost every $ x\in {Q_i}$ we have

\begin{eqnarray}\nonumber%\label{eq:wchanging}
|w(x)|&\leq& \hat C\int_{{Q_i \cup Q_i^r}}\frac{|\nabla w(z)|}{|x-z|^{N-1}}dz\\\nonumber
&=&\hat C\int_{{Q_i}}\frac{|\nabla w(z)|}{|x-z|^{N-1}}dz+\hat C\int_{{Q_i^r}}\frac{|\nabla w(z)|}{|x-z|^{N-1}}dz\\\nonumber
&\leq& 2\hat C\int_{{Q_i}}\frac{|\nabla w(z)|}{|x-z|^{N-1}}dz\,,
\end{eqnarray}
where in the last line we used the following  standard changing of variables
$$(z^t)'=z'\quad\text{and}\quad z_N^t=2\beta - z_N,$$
%$$
%\begin{cases}
%z_1^t=z_1\\
%\vdots
%\\
%z_{N-1}^t=z_{N-1}\\
%z_N^t=2\beta - z_N,
%\end{cases}
%$$
the fact that for $x\in Q_i$, it holds that $(|x-z|)\Big |_{z\in Q_i}\leq (|x-z^t|)\Big |_{z\in Q_i}$ and that, by \eqref{eq:w} it holds that
$|\nabla w(z)|=|\nabla w(z^t)|$. \\

 \

\noindent  Hence  \eqref{eq:hip_real} holds and, in order to prove \eqref{eq:wkfjbnkdfhsbmgkdkb}, we need to show that
\eqref{eq:weight} holds with
\[
\rho\,:=\,|\nabla u_\lambda|^{p-2}\,.
\]
Note now that, if $w$ vanishes identically in $Q_i$, then there is nothing to prove. If not it is easy to see that by our assumptions (see \eqref{ipag}) and by the classical Harnack inequality, it follows  that there exists $\bar \gamma >0$ such that
\begin{equation}\label{ipaggtt}
u\geq\bar \gamma>0\qquad\text{in}\quad \tilde Q_i'\times [\lambda/2\,,\,4\lambda]
\end{equation}
where
\[
\tilde Q_i'\,:=\,\{x\in\mathbb{R}^{N-1}\,\,:\,\,dist(x,Q_i')<1\}\,.
\]
 Let us consider  $Q_i^{\mathcal R_{\lambda}}$ obtained by the reflection  of $Q_i$ with respect to the hyperplane $T_\lambda=\{(x',y)\in \mathbb{R}^N\,:\ y=\lambda\}$. Since $Q_i^{\mathcal R_{\lambda}}$ is bounded away from the boundary $\mathbb{R}^N$, namely
$$\text{dist}\,(Q_i^{\mathcal R_{\lambda}} \,,\,\{y=0\})\geq \lambda>0,$$
thanks to \eqref{ipaggtt}
then   Proposition \ref{pro:SobConstant} apply with 
\begin{equation}\nonumber
\beta_1=\underset{t\in [\bar\gamma,\|u\|_{L^\infty(\Sigma_\Lambda)}]}{\min}\,f(t)\qquad \text{and}\qquad \beta_2=\lambda
\end{equation}
and we obtain that
\begin{eqnarray}\nonumber%\label{eq:weightcaz}
\int_{Q_i^{\mathcal R_{\lambda}}}\frac{1}{|\nabla u|^{p-2}} \frac{1}{|x-y|^{\gamma}}\,dy\leq C^*_1(\beta_1,\beta_2)\qquad \text{for any}\quad x\in Q_i^{\mathcal R_{\lambda}}.
\end{eqnarray}	
By symmetry we deduce therefore that
\begin{eqnarray}\nonumber%\label{eq:weightcazcaso2}
\int_{Q_i}\frac{1}{|\nabla u_\lambda|^{p-2}} \frac{1}{|x-y|^{\gamma}}\,dy\leq C^*_1(\beta_1,\beta_2)\qquad \text{for any}\quad x\in Q_i,
\end{eqnarray}
so that we can exploit Theorem \ref{thm:Poincar} to deduce \eqref{eq:wkfjbnkdfhsbmgkdkb} and consequently \eqref{eq:wkfjbnkdfhsdkb}.
\

\noindent Let us now estimate the R.H.S. of  \eqref{eq:cn33}.  Recalling \eqref{eq:I1} we get
\begin{eqnarray}\nonumber%\label{eq:cn3}
\\\nonumber
I_1&=&2\check C\int_{\mathcal{C}(2R)} (|\nabla u|+|\nabla u_\lambda|)^{p-2}|\nabla(u-u_\lambda)^+| \varphi|\nabla \varphi| (u-u_\lambda)^+ \\\nonumber
&=&2\check C\int_{\mathcal{C}(2R)} (|\nabla u|+|\nabla u_\lambda|)^{\frac{p-2}{2}}|\nabla(u-u_\lambda)^+| \,\varphi  (|\nabla u|+|\nabla u_\lambda|)^{\frac{p-2}{2}}|\nabla \varphi|  (u-u_\lambda)^+\\\nonumber &\leq&\delta' \check C\int_{\mathcal{C}(2R)}(|\nabla u|+|\nabla u_\lambda|)^{p-2}|\nabla(u-u_\lambda)^+|^2
\varphi^2
\\\nonumber
&+&\frac{\check C}{\delta'}\int_{\mathcal{C}(2R)}(|\nabla u|+|\nabla u_\lambda|)^{p-2}
|\nabla\varphi|^2[(u-u_\lambda)^+]^2,
\end{eqnarray} where in the last inequality  we used weighted Young inequality, with $\delta'$ to be chosen later.
 Hence
 \begin{equation}\label{eq:supI_1}
 I_1\leq I_1^a+I_1^b,
 \end{equation} where
 \begin{eqnarray}\label{eq:supI_1^a}
 I_1^a&:=&\delta' \check C\int_{\mathcal{C}(2R)}(|\nabla u|+|\nabla u_\lambda|)^{p-2}|\nabla(u-u_\lambda)^+|^2
\varphi^2,\\\nonumber
I_1^b&:=&\frac{\check C}{\delta'}\int_{\mathcal{C}(2R)}(|\nabla u|+|\nabla u_\lambda|)^{p-2}
|\nabla\varphi|^2[(u-u_\lambda)^+]^{2}.
 \end{eqnarray}
Using the covering in \eqref{eq:Qunion},  the properties of the cut-off function in \eqref{Eq:Cut-off1}  and   the fact  that $|\nabla u|$ and  $|\nabla u_\lambda| $ are bounded, by \eqref{eq:wkfjbnkdfhsdkb} we deduce that
\begin{eqnarray}\label{eq:cn4}
I_1^b&\leq&\sum_{i=1}^{\overline{N}}\frac{C}{\delta' R^2}\int_{\mathcal{C}(2R) \cap Q_i }[(u-u_\lambda)^+]^2\\\nonumber&\leq& \max_i C_P(Q_i)\sum_{i=1}^{\overline{N}}\frac{C}{\delta' R^2}\int_{\mathcal{C}(2R) \cap Q_i }(|\nabla u|+|\nabla u_\lambda |)^{p-2}|\nabla(u-u_\lambda)^+|^2\\\nonumber&\leq&  C^*_P\frac{C}{\delta' R^2}\int_{\mathcal{C}(2R) }(|\nabla u|+|\nabla u_\lambda |)^{p-2}|\nabla(u-u_\lambda)^+|^2
\end{eqnarray}
 where $C^*_P=\max_i C_P(Q_i)$ and $C=C(p, \|\nabla u\|_{L^\infty(\Sigma_\Lambda)})$.
\\

\noindent Now we estimate the term $I_2$ in \eqref{eq:cn33}.
Being $f$ locally Lipschitz continuous
form  \eqref{eq:I2'}, arguing as in \eqref{eq:cn4}, we get  that
\begin{eqnarray}\nonumber%\label{eq:I22222'}
I_2&\leq&\int_{\mathcal{C}(2R)}\frac{f(u)-f(u_\lambda)}{u-u_\lambda}[(u-u_\lambda)^+]^2\\\nonumber
&\leq&  C^*_P\cdot C\int_{\mathcal{C}(2R) }(|\nabla u|+|\nabla u_\lambda |)^{p-2}|\nabla(u-u_\lambda)^+|^2,
\end{eqnarray}
where $C^*_P$ is as in \eqref{eq:cn4} and $C=C(f,\lambda,  \|\nabla u\|_{L^\infty(\Sigma_\Lambda)})$. Actually the constant $C$ will depend on the Lipschitz constant of $f$ in the interval $\big[0,\max\{\|u\|_{L^{\infty}(\Sigma_{\Lambda})},\|u_\lambda\|_{L^{\infty}(\Sigma_{\Lambda})}\}\big].$
By~\eqref{eq:cn33}, \eqref{eq:supI_1},  \eqref{eq:supI_1^a} and \eqref{eq:cn4}, up to redefining the constants, we obtain
\begin{eqnarray}\label{eq:L1}
&&C\int_{\mathcal{C}(2R)}(|\nabla u|+|\nabla u_\lambda|)^{p-2}|\nabla(u-u_\lambda)^+|^2\varphi^2\\\nonumber
&\leq&\delta' \int_{\mathcal{C}(2R)}(|\nabla u|+|\nabla u_\lambda|)^{p-2}|\nabla(u-v)^+|^2
\\\nonumber
&+&\frac{C^*_P}{R}\int_{\mathcal{C}(2R)}(|\nabla u|+|\nabla u_\lambda|)^{p-2 }|\nabla(u-u_\lambda)^+|^2 \\\nonumber&+& C^*_P\int_{\mathcal{C}(2R)}(|\nabla u|+|\nabla u_\lambda|)^{p-2}|\nabla(u-u_\lambda)^+|^2.
\end{eqnarray}
Let us choose $\delta'$ small  in \eqref{eq:L1}  such that
$C-\delta' >C/2$ and fix $R>1$. Then we obtain
\begin{eqnarray}\label{eq:L111111}
&&\int_{\mathcal{C}(2R)}(|\nabla u|+|\nabla u_\lambda|)^{p-2}|\nabla(u-u_\lambda)^+|^2\varphi^2\\\nonumber
&\leq&4\frac{C^*_P}{C}\int_{\mathcal{C}(2R)}(|\nabla u|+|\nabla u_\lambda|)^{p-2 }|\nabla(u-u_\lambda)^+|^2 .
\end{eqnarray}
To conclude we  set now
\begin{equation}\label{eq:L1111111}
 \mathcal {L}(R)\,:=\,\int_{\mathcal{C}(R)}(|\nabla u|+|\nabla u_\lambda|)^{p-2}|\nabla(u-u_\lambda)^+|^2.
\end{equation}
We can fix $h_0=h_0(f,p,\gamma,\lambda, N,  \|\nabla u\|_{L^\infty(\Sigma_\Lambda)})$ positive, such that if
$$\beta -\alpha \leq h_0,$$ (recall that $C^*_P\rightarrow 0$ in this case since    $\text{diam}(Q_i)\rightarrow 0$, see \eqref{eq:diameterQ}) then
$$\displaystyle \theta:=4\frac{C^*_P}{C}< 2^{-N}.$$
Then, by \eqref{eq:L111111} and \eqref{eq:L1111111}, we have
$$
\begin{cases}
\mathcal{L}(R)\leq \theta \mathcal{L}(2R)& \forall R>1,\\
\mathcal{L}(R)\leq CR^{N} & \forall R >1.
\end{cases}
$$
>From Lemma \ref{Le:L(R)} with $\nu=N$ and $\teta<2^{-N}$, we get $$\mathcal{L}(R)\equiv0$$ and consequently that $(u-u_\lambda)^+\equiv 0$.
\end{proof}
The proof of our main result will follow by the moving plane procedure that will be strongly based on Proposition \ref{lem:khjadsjhdfhjdf}. As it will be clear later,
it will be needed to substitute $\lambda$ by $\lambda+\varepsilon$ in order to proceed further from the maximal position. To do this we need to be very accurate in the estimate of the constants involved, namely we need to control role of $h_0$ in Proposition \ref{lem:khjadsjhdfhjdf}. This is the reason for which  we introduced the larger strip $\Sigma_\Lambda$ that allows to control the needed bound on $|\nabla u|$. But still we need to control the dependence  of $h_0$ on $\gamma$ (see \eqref{ipag}). This can be resumed saying that we need a uniform (with respect to $\varepsilon$) control on the infimum of $u$ far from the boundary, and in the set where $u$ is greater than $u_\lambda$. This motivates the following
\begin{lemma}\label{cacsasddcarlll}
Let $\lambda >0$ and let $u$ be  a solution to \eqref{E:P}, with $|\nabla u|\in L^{\infty}(\mathbb{R}^N_+)$ and $u_\lambda$ defined  as in \eqref{eq:reflected}. Assume here that ($h_f$) is fulfilled with $f_0=0$ and
define
\begin{equation}\nonumber
\mathcal I_{(\lambda,\varepsilon)}^+=\Big\{(x',\lambda)\,:\, x'\in \mathcal P\big(Supp\,(u-u_{\lambda+\varepsilon})^+\big) \Big\}.
\end{equation}
Then there exist  $\varepsilon_0>0$ and $\gamma >0$   such that
\begin{equation}\nonumber
u(x)\geq \gamma \quad\text{on}\quad \mathcal I_{(\lambda,\varepsilon)}^+,
\end{equation}
for all $0\leq\varepsilon\leq \varepsilon_0$.
\end{lemma}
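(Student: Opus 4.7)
The plan is to argue by contradiction via a blow-up, in the spirit of Theorem~\ref{lem:saliacavallo}. If the conclusion fails, I can extract sequences $\varepsilon_n\downarrow 0$ and $x_n'\in\mathbb{R}^{N-1}$ with $x_n'\in\mathcal{P}(\mathrm{Supp}\,(u-u_{\lambda+\varepsilon_n})^+)$ and $\alpha_n:=u(x_n',\lambda)\to 0$. Because $(u-u_{\lambda+\varepsilon_n})(x_n',\cdot)$ is non-positive on $\{y=0,\,y=\lambda+\varepsilon_n\}$ but has strictly positive supremum in between, I pick $y_n\in(0,\lambda+\varepsilon_n)$ attaining this maximum; at such an interior maximum
\[
u(x_n',y_n)\ge u(x_n',2(\lambda+\varepsilon_n)-y_n),\qquad \partial_y u(x_n',y_n)+\partial_y u(x_n',2(\lambda+\varepsilon_n)-y_n)=0.
\]

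Rescale by setting $w_n(x',y):=u(x'+x_n',y)/\alpha_n$, so that $w_n\ge 0$, $w_n(\cdot,0)\equiv 0$, $w_n(0,\lambda)=1$, and $-\Delta_p w_n=c_n w_n^{p-1}$ with $c_n(x):=f(u(x'+x_n',y))/u(x'+x_n',y)^{p-1}$ uniformly bounded in $L^\infty$ on strips (by $(h_f)$ and the bound on $\|u\|_{L^\infty(\Sigma_L)}$). Applying the classical Harnack inequality together with the boundary Harnack inequality (Theorem~\ref{thm:BoundaryHarnack}) exactly as in the proof of Theorem~\ref{lem:saliacavallo}, $\{w_n\}$ is locally uniformly bounded, and classical $C^{1,\alpha}$-regularity yields, up to a subsequence, $w_n\to w_0$ in $C^{1,\alpha'}_{loc}(\overline{\mathbb{R}^N_+})$. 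Since $\alpha_n\to 0$ and $w_n$ is locally bounded, $u(\cdot+x_n',\cdot)=\alpha_n w_n\to 0$ locally uniformly; the hypothesis $f_0=0$ then forces $c_n\to 0$ pointwise, so $w_0$ is $p$-harmonic in $\mathbb{R}^N_+$ with $w_0(\cdot,0)\equiv 0$ and $w_0(0,\lambda)=1$. Theorem~\ref{semihop} gives $w_0>0$ in $\mathbb{R}^N_+$ and $\partial_y w_0(x',0)>0$ on $\partial\mathbb{R}^N_+$.

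Along a further subsequence $y_n\to y_0\in[0,\lambda]$, and the limit of the two identities displayed above yields
\[
w_0(0,y_0)\ge w_0(0,2\lambda-y_0),\qquad \partial_y w_0(0,y_0)+\partial_y w_0(0,2\lambda-y_0)=0.
\]
When $y_0=0$ the first inequality is the desired contradiction, since $0=w_0(0,0)\ge w_0(0,2\lambda)>0$. The main obstacle is the case $y_0\in(0,\lambda]$: here I would reach a contradiction from the second identity by proving $\partial_y w_0>0$ throughout $\overline{\mathbb{R}^N_+}$, so that both summands are strictly positive. Since $w_0$ is $p$-harmonic, $\partial_y w_0$ is a weak solution of the linearized equation $L_{w_0}(\partial_y w_0,\cdot)=0$, and Theorem~\ref{hthPMFderrr} upgrades any local sign $\partial_y w_0\ge 0$ to strict positivity on connected bounded subdomains. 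To propagate the sign $\partial_y w_0\ge 0$ from the strip near the boundary—where it is given by Hopf's lemma—to the whole half-space I would run the moving plane method on $w_0$ itself: Theorem~\ref{lem:saliacavallo} applies to $w_0$ (the trivial nonlinearity $\tilde f\equiv 0$ automatically satisfies $|\tilde f(t)|\le \bar k t^{p-1}$) and provides a starting strip $\Sigma_{\tilde\lambda}$ where $\partial_y w_0>0$; the scheme of Proposition~\ref{lem:khjadsjhdfhjdf}, which simplifies substantially because the vanishing of the nonlinearity kills the term $I_2$ and the local positivity of $w_0$ near $y=\lambda$ (given by $w_0(0,\lambda)=1$ and Harnack) replaces the lower bound $u\ge\gamma$, propagates the comparison $w_0\le (w_0)_\theta$ in $\Sigma_\theta$ to every $\theta>0$. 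This produces the monotonicity $\partial_y w_0\ge 0$ in $\mathbb{R}^N_+$, and Theorem~\ref{hthPMFderrr} lifts it to $\partial_y w_0>0$, closing the argument.
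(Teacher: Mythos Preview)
Your blow-up and the extraction of the limit $w_0$ match the paper's argument, and the paper also arrives at a nonnegative $p$-harmonic function in $\mathbb{R}^N_+$ with $w_0(\cdot,0)\equiv 0$ and $w_0(0,\lambda)=1$. The divergence is in how the contradiction is obtained from $w_0$. The paper does \emph{not} run a moving-plane argument on $w_0$; it invokes the Liouville theorem of Kilpel\"ainen--Shahgholian--Zhong \cite[Theorem~3.1]{KSZ}, which forces $w_0(x',y)=ky$ for some $k>0$. The contradiction is then immediate: if $y_0\in[0,\lambda)$ the inequality $w_0(0,y_0)\ge (w_0)_\lambda(0,y_0)$ fails, and if $y_n\to\lambda$ a mean-value argument gives $\partial_y w_0(0,\lambda)\le 0$, impossible for $ky$.

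Your alternative route---proving $\partial_y w_0>0$ via Theorem~\ref{lem:saliacavallo} and Proposition~\ref{lem:khjadsjhdfhjdf} applied to $w_0$ with $\tilde f\equiv 0$---has a genuine gap. The machinery of Proposition~\ref{lem:khjadsjhdfhjdf} is built on the weighted Poincar\'e inequality \eqref{eq:poincareineq} with weight $\rho=|\nabla u_\lambda|^{p-2}$, and the integrability condition \eqref{eq:weight} is supplied by Proposition~\ref{pro:SobConstant}, whose hypothesis is that the right-hand side $h$ be \emph{strictly positive} on the relevant set. For your $w_0$ the equation is $-\Delta_p w_0=0$, so $h\equiv 0$ and Proposition~\ref{pro:SobConstant} is unavailable; without it there is no control on $C^*$ and hence no weighted Poincar\'e constant. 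Your claim that the vanishing of $I_2$ removes the need for the lower bound overlooks that the weighted Poincar\'e is used again in \eqref{eq:cn4} to estimate $I_1^b$ (the cross term coming from $\nabla\varphi$), which does not disappear when $\tilde f\equiv 0$. The ``local positivity of $w_0$ near $y=\lambda$'' is irrelevant here: the role of $u\ge\gamma$ in the paper is to make $f(u)\ge\beta_1>0$ so that Proposition~\ref{pro:SobConstant} applies, not merely to keep $u$ away from zero. In addition, Theorem~\ref{hthPMFderrr}, which you invoke to pass from $\partial_y w_0\ge 0$ to $>0$, also assumes $f$ positive, and the global bound $|\nabla w_0|\in L^\infty(\mathbb{R}^N_+)$ required by both Theorem~\ref{lem:saliacavallo} and Proposition~\ref{lem:khjadsjhdfhjdf} is not established by your $C^{1,\alpha'}_{loc}$ convergence. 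The KSZ theorem is precisely what replaces all of this for $p$-harmonic functions in the half-space.
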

\begin{proof}If the result is not true, then
by contradiction given $\varepsilon_0 >0$ and $\gamma >0$,  we found $0\leq\varepsilon\leq \varepsilon_0$  and  a point $Q_\varepsilon=(x'_\varepsilon,\lambda)$ with $Q_\varepsilon\in \mathcal I_{(\lambda,\varepsilon)}^+$ such that
\begin{equation}\nonumber
u(x'_\varepsilon,\lambda)\leq \gamma.
\end{equation}
It is convenient to consider  $\varepsilon_0=\gamma=1/n$ and the corresponding  $\varepsilon=\varepsilon_n\leq\varepsilon_0$ defined by contradiction as above,
that obviously approaches zero as $n$ tends to infinity. Also we use the notation $Q_{\varepsilon_n}\in \mathcal I_{(\lambda,\varepsilon_n)}^+$.
On a corresponding sequence $P_n=(x_n',y_n)$ we have that
\begin{equation}\label{eq:hsakhdhgjdvgc}
u(x_n',y_n)\geq u_{ \lambda+\varepsilon_n}(x_n',y_n)\quad {\mbox{ with }} (x_n',y_n)\in\Sigma_{\lambda+\varepsilon_n},
\end{equation}
where the existence of the sequence $(x'_n,y_n)$ follows by the fact that $Q_{\varepsilon_n}\in \mathcal I_{(\lambda,\varepsilon_n)}^+$
and
(up to subsequences) $$y_n\rightarrow y_0\in [0,\lambda].$$
Moreover
\begin{equation}\nonumber
\lim_{n\rightarrow+\infty} u(x'_n,\lambda)\rightarrow 0.
\end{equation}
Let us  set
\begin{equation}\label{eq:ashgghaghabbb}
w_n(x',y)=\frac{u(x'+x_n', y)}{\alpha_n}\end{equation}
and
\begin{equation}\nonumber
\alpha_n:= u(x_n',\lambda),
\end{equation}
with $\displaystyle \lim_{n\rightarrow +\infty}\alpha_n=0$.
We remark that $w_n(0,\lambda)=1$. Then we have
\begin{eqnarray}\label{sdjkfgsfgkbbb}
-\Delta_pw_n(x)
=c_n(x)w_n^{p-1}(x),
\end{eqnarray}
for
\begin{equation}\label{eq:cccccc(x)}
c_n(x)=\frac{f\big(u(x'+x_n', y)\big)}{u^{p-1}(x'+x_n', y)}\,.
\end{equation}
Since for any $L>0$ we have that $u\in L^{\infty}(\Sigma_{(L)})$ (by the Dirichlet condition and because $|\nabla u|$ is bounded in $\mathbb{R}^N_+$),  by $(h_f)$ we obtain that
\begin{equation}\label{eq:lacarrozzella}
\|c_n(x)\| _{L^{\infty}(\Sigma_{L})}\leq C(L).
\end{equation}
For $L>\lambda$ we consider the cylinder
$\mathcal C_{(0,L)}(R)$ and,
arguing as in the proof of Theorem \ref{lem:saliacavallo} (see the first claim there), we deduce that
\begin{equation}\nonumber
\|w_n\|_{L^{\infty}(\mathcal C_{(0,L)}(R))}\leq C(L)\,.
\end{equation}
Now, as in the proof of Theorem \ref{lem:saliacavallo}, we  consider $u$ defined on the entire space $\mathbb{R}^N$ by odd reflection and,
by standard regularity theory (see \cite{Di,T}), we deduce that
 $$\|w_n\|_{C^{1,\alpha}_{loc}(\mathcal C_{(-L,L)}(R))}\leqslant C(L)$$
for some $0<\alpha<1$. This allows to use Ascoli-Arzel\`{a} theorem and get
\begin{equation}\nonumber%\label{roccaseccatrubis}
w_n\overset{C^{1,\alpha'}_{loc}(\mathcal C_{(-L,L)}(R))}{\longrightarrow}w_{L,R}
\end{equation}
up to subsequences, for $\alpha '<\alpha$.
Replacing $L$ by $L+n$ ($n\in\mathbb{N}$), and $R$ by $R+n$ we can repeat the argument above and then perform a standard diagonal process
to define
 $w$  in the entire space $\mathbb{R}^N$ in such a way that $w$ is locally the limit of subsequences of $w_n$. It turns out that, by construction, setting
  $$w_+(x) =w(x)\cdot \chi_{\overline{\mathbb{R}^N_+}} $$
  we have that
\begin{equation}\nonumber
\begin{cases}
-\Delta_p w_+=0, & \text{ in }\mathbb{R}^N_+\\
w_+(x',y) \geqslant 0, & \text{ in } \mathbb{R}^N_+\\
w_+(x',0)=0,&  \text{ on }\partial\mathbb{R}^N_+\,.
\end{cases}
\end{equation}
This is a simple computation
where in \eqref{sdjkfgsfgkbbb} we need to use the fact that $c_n(x)\rightarrow 0$ as $n\rightarrow +\infty$ uniformly on compact sets. This follows
in fact considering that $w_n$ is uniformly bounded on compact sets and then, by \eqref{eq:ashgghaghabbb} it follows that $u(x+x'_n,y)\rightarrow 0$ as $n\rightarrow +\infty$.
By \eqref{eq:cccccc(x)} and recalling that
$$\lim_{t\rightarrow 0}\frac{f(t)}{t^{p-1}}=0,$$
finally it follows that $c_n(x)\rightarrow 0$ on compact sets.\\
\noindent By the strong maximum principle,  we have now  that $w_+>0$, in view of the fact that (by uniform convergence of $w_n$)
$w_+(0,\lambda)=1$.
By \cite[Theorem 3.1]{KSZ}, it follows  that $w_+$ must be affine linear, i.e $w_+(x',y)=ky$, for some $k>0$ by the Dirichlet condition.
If $y_0\in [0,\lambda)$, by \eqref{eq:hsakhdhgjdvgc}  and by the uniform convergence of $w_n\rightarrow w_+$, we would have
\begin{equation}\nonumber
w_+(0,y_0)\geq {(w_+)}_{\lambda}(0,y_0).
\end{equation}
This is a contradiction since $w_+(x',y)=ky$ for some $k>0$. \\

\noindent Therefore let us assume that  $y_n\rightarrow \lambda$ and note that,  by the mean value theorem, at some point $\xi_n$ lying on the segment from
 $(0,y_n)$ to $(0, 2(\lambda+\varepsilon_n)-y_n)$, it should hold that
\begin{eqnarray}\nonumber%\label{kjlsljkasaks}
\frac{\partial w_n}{\partial y}(0, \xi_n)\leq0\,.
\end{eqnarray}
Since  $w_n\rightarrow w_+$  in $C^{1,\alpha}_{loc}(\overline{\mathbb{R}^N_+})$ we would have that
$$ \frac{\partial w_+}{\partial y}(0,\lambda)\leq 0\,.$$
Again this is a contradiction since $w_+(x',y)=ky$, for some $k>0$ and the result is proved.
\end{proof}

\noindent The results proved above allow us to conclude the proof of our main result.\\
\begin{proof}[Proof of Theorem \ref{thm:monotoniadellau}]
We consider here the case  when ($h_f$) is fulfilled with $f_0=0$ since in the simpler case $f_0>0$ the result follows directly by Theorem 3 in \cite{FMS3}.
 Thanks to  Corollary \ref{cor:monotonia}
 we have that the set
 $$\Lambda\equiv\{t>0\,\, :\,\, u\leqslant u_\alpha\, \quad\text{in}\quad \Sigma_\alpha\, \quad\forall \alpha \leqslant t\},$$
 is not empty. To conclude the proof, if  we set $$\bar{\lambda}=\sup\Lambda$$
  (that now is well defined)   we have to show that
\begin{center}
$\bar\lambda=+\infty$.
\end{center}
By contradiction  assume that $\bar \lambda< +\infty$ and set
$$W^+_{\varepsilon}:=(u-u_{\bar \lambda +\varepsilon})^+\chi_{\Sigma_{\bar \lambda+\varepsilon}}.$$
 We point out that given $0<\delta< \bar\lambda /2$, there exists $\varepsilon_0$ such that for all $0<\varepsilon\leq \varepsilon_0$ it follows that
\begin{equation}\nonumber%\label{eq:dellostep2}
Supp \, W^+_{\varepsilon}\subset \Sigma_{\delta}\, \cup \, \Sigma_{(\bar \lambda-\delta, \bar \lambda+\varepsilon)}.
\end{equation}
This follows by an analysis of the limiting profile at infinity. We do not add the details since the proof is exactly the one in  \cite[Proposition 4.1]{FMS}.
\
For $\delta$ and $\varepsilon_0$ sufficiently small  Proposition~\ref{lem:khjadsjhdfhjdf} applies in $\Sigma_{\delta}$ and in $ \Sigma_{(\bar \lambda-\delta, \bar \lambda+\varepsilon)}$ with $\lambda=\bar\lambda+\varepsilon$ and $\Lambda=2\bar\lambda+1$. It is crucial here the fact that,
thanks to Lemma \ref{cacsasddcarlll}, the parameter $h_0$ in the statement of Proposition~\ref{lem:khjadsjhdfhjdf}, can be chosen independently of $\varepsilon$ since there $\gamma$ does not depend on $\varepsilon$. Then we conclude that
 $W_{\varepsilon}^+\equiv 0$. This is   a contradiction with the definition of $\bar \lambda$, so that we have proved that $\bar\lambda=\infty$.
This implies the monotonicity of $u$ in the half-space, that is $
 \frac{\partial u}{\partial y}(x)\geqslant 0$ in  $\mathbb{R}^N_+
 $. By Theorem \ref{hthPMFderrr}, since $u$ is not trivial,  it follows
  \[
 \frac{\partial u}{\partial y}(x)>0 \quad \text{in}\quad \mathbb{R}^N_+.
 \]
Finally, to prove that $u \in C^{2,\alpha'}_{loc}(\overline{\mathbb{R}^N_+})$, just note that, from the fact that $ \frac{\partial u}{\partial y}\,>\,0$, we deduce that  the set of critical points $\{\nabla\,u=0\}$ is empty and consequently the equation is no more degenerate. The $C^{2,\alpha'}$ regularity follows therefore by standard regularity results, see  \cite{GT}.

\end{proof}

\begin{proof}[Proof of Theorem \ref{mainthmnonpositive}]
By Theorem 1.7 in \cite{FMS} it follows that $0<u\leq t_0$. Thanks to the behaviour of the nonlinearity near $t_0$ (see \eqref{dnvkjdvkvkvks}), then the strong maximum principle applies and implies that actually $0<u< t_0$ in the half space. Arguing now as in the proof of Theorem 1.3 in \cite{FMRS} it follows that $u$ is strictly bounded away from $t_0$ in $\Sigma_\lambda$ for any $\lambda>0$. Now the monotonicity of the solution follows by our Theorem \ref{thm:monotoniadellau} (in the case $f_0>0$ the result follows also directly by Theorem 3 of \cite{FMS3}).
Note in fact that   the condition ($h_f$) is  satisfied in the range of values that the solutions takes in any strip and this is sufficient in order to run over again the moving plane procedure.
\end{proof}

\bigskip

\end{document}